%
%
%
\documentclass{amsproc}

\newtheorem{theorem}{Theorem}[section]

\newtheorem{corollary}[theorem]{Corollary}
\newtheorem{proposition}[theorem]{Proposition}

\theoremstyle{definition}

\theoremstyle{remark}

\numberwithin{equation}{section}

\DeclareMathOperator{\spn}{span}
\DeclareMathOperator{\Real}{Re}
\DeclareMathOperator{\Imag}{Im}
\renewcommand{\Re}{\Real}
\renewcommand{\Im}{\Imag}

\numberwithin{equation}{section}

\begin{document}

\title{Transformations of polynomial ensembles}

\author{Arno B. J. Kuijlaars}
\address{KU Leuven, Department of Mathematics, 
Celestijnenlaan 200B box 2400, 3001 Leuven, Belgium}
\email{arno.kuijlaars@wis.kuleuven.be}

\date{October 6, 2014.}

\dedicatory{Dedicated to Ed Saff  on the occasion of his 70th birthday}


\begin{abstract} 
A polynomial ensemble is a  probability density function for the position
of $n$ real particles of the form 
$\frac{1}{Z_n} \, \prod_{j<k} (x_k-x_j) \, \det \left[ f_k (x_j) \right]_{j,k=1}^n$,
for certain functions $f_1, \ldots, f_n$. Such ensembles appear frequently
as the joint eigenvalue density of random matrices. 
We present a number of  transformations that 
preserve the structure of a polynomial ensemble. These transformations include
the restriction of a Hermitian matrix by removing one row and one column, 
a rank-one modification of a Hermitian matrix, and the extension
of a Hermitian matrix by adding an extra row and column with complex Gaussians.

\end{abstract}

\maketitle



\section{Polynomial ensembles}

A polynomial ensemble is a probability density function on $\mathbb R^n$ of
the form
\begin{equation} \label{f-pol-ensemble} \mathcal P(x_1,\ldots, x_n) = 
	\frac{1}{Z_n} \, \Delta_n(x) \, \det \left[ f_k (x_j) \right]_{j,k=1}^n,
	\end{equation}
where $f_1, \ldots, f_n$ is a given sequence of real-valued functions, 
\[ \Delta_n(x) = \prod_{j < k} (x_k - x_j) = \det [ x_j^{k-1}]_{j,k=1}^n,
\qquad x = (x_1, \ldots, x_n) \in \mathbb R^n, \] 
denotes the Vandermonde determinant, and $Z_n$ is a normalization constant. 
Certain conditions on the functions $f_1, \ldots, f_n$ have to be satisfied to
ensure that \eqref{f-pol-ensemble} is indeed a probability density. For example,
the functions should be linearly independent and the integrals
\[ \int_{-\infty}^{\infty} x^{j-1} f_k(x) dx, \qquad j, k = 1, \ldots, n \]
should be convergent. In addition, \eqref{f-pol-ensemble} has to be non-negative for
all possible choices of $(x_1, \ldots, x_n \in \mathbb R^n$. The probability density \eqref{f-pol-ensemble}
only depends on the linear span of the functions $f_1, \ldots, f_n$, as one can see
by applying column transformations to the second determinant in \eqref{f-pol-ensemble}. 

A special case arises if $f_k(x) = x^{k-1} w(x)$, for $k=1, \ldots n$, with
$w$ an integrable non-negative function on $\mathbb R$ such that the moments
up to order $2n-2$ exist. In that
case \eqref{f-pol-ensemble} can be written as
\begin{equation} \label{opol-ensemble} \mathcal P(x_1,\ldots, x_n) = 
	\frac{1}{Z_n} \, \Delta_n^2(x) \, \prod_{j=1}^n w(x_j),
		\qquad x = (x_1, \ldots, x_n) \in \mathbb R^n,
	\end{equation}
and \eqref{opol-ensemble} is known as an orthogonal polynomial ensemble \cite{Konig},
as the analysis of \eqref{opol-ensemble} relies on the polynomials that
are orthogonal with respect to $w$. The ensembles \eqref{opol-ensemble} arise as the
joint probability density of eigenvalues of unitary invariant ensembles of
Hermitian random matrices \cite{Deift}.
 The polynomial ensembles \eqref{f-pol-ensemble}
also include the multiple orthogonal polynomials ensembles, see \cite{Kui},
where also more examples from random matrix theory are given.

On the other hand, we have that \eqref{f-pol-ensemble} is a special case
of the more general class of biorthogonal ensembles, see \cite{Bor,ClRo},
\begin{equation} \label{bio-ensemble} \mathcal P(x_1,\ldots, x_n) = 
	\frac{1}{Z_n} \, 	\det\left[ g_k(x_j) \right]_{j,k=1}^n \,  \det \left[ f_k (x_j) \right]_{j,k=1}^n,
	\end{equation}
which involves two sequences  $g_1, \ldots, g_n$ and $f_1, \ldots, f_n$  of given functions.
It is known that \eqref{bio-ensemble} is determinantal \cite{Bor}, which means
that there exists a kernel $K_n : \mathbb R \times \mathbb R \to \mathbb R$ such
that
\[ \mathcal P(x_1,\ldots, x_n) = \frac{1}{n!} \det\left[ K_n(x_j,x_k) \right]_{j,k=1}^n \]
and such that for every $m = 1, \ldots, n-1$,
\[ \int_{\mathbb R^{n-m}} \mathcal P(x_1, \ldots, x_n) d x_{m+1} \cdots dx_n
	=  \frac{(n-m)!}{n!} \det \left[ K_n(x_j, x_k) \right]_{j,k=1}^m. \]
The correlation kernel $K_n$ has the form
\[ K_n(x,y) = \sum_{k=1}^n \psi_k(x) \phi_k(y) \]
where $\spn \{ \psi_1, \ldots, \psi_n \} = \spn \{g_1, \ldots, g_n \}$,
$\spn \{ \phi_1, \ldots, \phi_n \} = \spn \{f_1, \ldots, f_n \}$ and  
\[ \int_{-\infty}^{\infty} \psi_j(x) \phi_k(x) dx = \delta_{j,k} \qquad j, k =1,\ldots, n. \]
In the case \eqref{f-pol-ensemble}  we may write
\[ K_n(x,y) = \sum_{j=0}^{n-1} P_j(x) Q_j(y) \]
where $P_j$ is a monic polynomial of degree $j$ for $j=0, \ldots, n-1$,
the dual functions $Q_0, \ldots, Q_{n-1}$ are in the linear span of $f_1, \ldots, f_n$, and
the biorthogonality condition 
\begin{equation} \label{biorthogonal} 
	\int_{-\infty}^{\infty} P_j(x) Q_k(x) dx = \delta_{j,k}, \qquad j,k=0, \ldots, n-1 
	\end{equation}
is satisfied. 
In this case we can also consider the monic polynomial $P_n$ of degree $n$ such that 
\eqref{biorthogonal} also holds for $j=n$. This polynomial is given by
\[ P_n(x) = \mathbb E \left[ \prod_{j=1}^n (x-x_j) \right] \]
where the averaging is over $(x_1, \ldots, x_n)$ in the polynomial ensemble \eqref{f-pol-ensemble}.
If the points $x_1, \ldots, x_n$ come from eigenvalues of a random matrix, then $P_n$ is
the average characteristic polynomial.

\section{Known transformations}

This paper discusses a number of  transformations that preserve the
structure of a polynomial ensemble. These transformations come from random matrix theory,
and the typical setting is the following. We assume that $X$ is a random matrix whose
eigenvalues (or squared singular values) are distributed according to a polynomial
ensemble \eqref{f-pol-ensemble}. Then we perform a certain transformation to obtain
from $X$ a new random matrix $Y$, and the result is that the eigenvalues (or squared
singular values) of $Y$ are again a polynomial ensemble.

\subsection{Product with Ginibre matrix}
The first example of such a transformation comes from recent work of the author
with Dries Stivigny \cite{KuSt}.  
It deals with the squared singular values of rectangular matrices. Recall that the
squared singular values of a rectangular complex matrix $X$ are the eigenvalues of $X^*X$. 
The transformation on $X$ is multiplication by a complex Ginibre matrix, where
a complex Ginibre matrix is a random matrix whose entries are independent standard
complex Gaussians.

\begin{theorem} \label{GinibreTransformation}
Let $n, l, \nu$ be non-negative integers with $1 \leq n \leq l$.
Let $G$ be an $(n+\nu) \times l$ complex Ginibre matrix, and let
$X$ be a random matrix of size $l \times n$, independent of $G$,
such that the squared singular values $x_1, \ldots, x_n$ are a polynomial ensemble \eqref{f-pol-ensemble}
for certain functions $f_1, \ldots, f_n$ defined on $[0,\infty)$.
Then the squared singular values $y_1, \ldots, y_n$ of $Y = GX$ are a polynomial ensemble 
\begin{equation} \label{g-pol-ensemble} 
	\frac{1}{\tilde{Z}_n} \Delta_n(y) \, \det \left[ g_{k}(y_j) \right]_{j,k=1}^n, \qquad \text{ all } y_j > 0, 
	\end{equation}
where 
\begin{equation} \label{gk-definition} 
	g_k(y) = \int_0^{\infty} x^{\nu} e^{-x} f_k \left( \frac{y}{x} \right) \frac{dx}{x}, \qquad y > 0.
	\end{equation}
\end{theorem}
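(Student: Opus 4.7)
The plan is to compute the conditional density of the squared singular values of $Y=GX$ given those of $X$, and then integrate $X$ out with the Andr\'eief identity.

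By the bi-unitary invariance of the complex Ginibre distribution, if $X = U\Sigma V^*$ is an SVD of $X$ then $GX$ has the same law as $G'\Sigma V^*$ with $G'$ an $(n+\nu)\times l$ Ginibre matrix. Since $V^*$ is $n\times n$ unitary it preserves singular values, and because the last $l-n$ rows of $\Sigma$ vanish, the squared singular values of $Y$ agree in distribution, conditionally on $x=(x_1,\ldots,x_n)$, with those of $HD$, where $H$ is an $(n+\nu)\times n$ complex Ginibre matrix and $D=\diag(\sqrt{x_1},\ldots,\sqrt{x_n})$.

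Let $W = (HD)^*(HD) = D(H^*H)D$. Starting from the complex Wishart density $(\det M)^\nu e^{-\Tr M}$ of $M = H^*H$ and computing the Jacobian of $M\mapsto DMD$ on the space of $n\times n$ Hermitian matrices yields
\[
 p(W\mid x) \;\propto\; (\det W)^\nu \Bigl(\textstyle\prod_i x_i\Bigr)^{-(\nu+n)} \exp\bigl(-\Tr(D^{-2}W)\bigr).
\]
Passing to the eigenvalues $W = U\,\diag(y_1,\ldots,y_n)\,U^*$ produces the standard Vandermonde factor $\Delta_n(y)^2$, and the Harish-Chandra--Itzykson--Zuber formula evaluates
\[
 \int_{U(n)} \exp\bigl(-\Tr(D^{-2}U\,\diag(y)\,U^*)\bigr)\,dU \;=\; \frac{c_n\,\det\!\bigl[e^{-y_j/x_i}\bigr]_{i,j=1}^n}{\Delta_n(-1/x)\,\Delta_n(y)}.
\]
Using $\Delta_n(-1/x) = \Delta_n(x)\prod_i x_i^{-(n-1)}$ and absorbing the remaining factors $y_j^\nu$ and $x_i^{-(\nu+1)}$ column- and row-wise into the determinant, one arrives at
\[
 p(y\mid x) \;\propto\; \frac{\Delta_n(y)}{\Delta_n(x)}\,\det\!\bigl[\phi(x_i,y_j)\bigr]_{i,j=1}^n, \qquad \phi(x,y) \;=\; \frac{y^\nu e^{-y/x}}{x^{\nu+1}}.
\]

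Multiplying this conditional density by the polynomial ensemble \eqref{f-pol-ensemble} for $x$ cancels the two factors of $\Delta_n(x)$, and the Andr\'eief identity yields
\[
 p(y) \;\propto\; \Delta_n(y)\,\det\!\left[\int_0^\infty f_k(x)\,\phi(x,y_j)\,dx\right]_{j,k=1}^n.
\]
The substitution $u = y_j/x$ transforms the integral in column $j$ into $g_k(y_j) = \int_0^\infty u^\nu e^{-u} f_k(y_j/u)\,du/u$, which is exactly \eqref{gk-definition}, and \eqref{g-pol-ensemble} follows after all normalization constants are absorbed into $\tilde Z_n$. The main obstacle in this argument is the HCIZ evaluation that puts the conditional density into the clean determinantal form; once $\phi$ is identified, the cancellation of the Vandermonde factors and the Andr\'eief identity make the rest essentially automatic.
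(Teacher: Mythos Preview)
Your argument is correct, and it is precisely the approach of \cite{KuSt} that the paper cites for its proof: reduce via bi-unitary invariance of the Ginibre law to $HD$ with $H$ a rectangular Ginibre matrix, compute the conditional eigenvalue density of $D H^*H D$ from the complex Wishart density together with the Harish-Chandra--Itzykson--Zuber integral, and then integrate out $x$ using the Andr\'eief identity. The only minor points worth making explicit are that the $x_i$ are almost surely distinct and positive (so $D$ is invertible and HCIZ applies), and that the Jacobian of $M\mapsto DMD$ on $n\times n$ Hermitian matrices is $(\prod_i x_i)^n$, which you have used implicitly in arriving at the exponent $-(\nu+n)$.
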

\begin{proof} See \cite{KuSt}, where the proof is based on ideas taken from \cite{AIK,AKW}.
\end{proof}
Note that $g_k$ in \eqref{gk-definition} is the Mellin convolution of $x \mapsto x^{\nu} e^{-x}$ with $f_k$.

Theorem \ref{GinibreTransformation} can be applied repeatedly and it follows that the multiplication with
any number of complex Ginibre matrices preserves the structure of a polynomial ensemble
for the squared singular values.

Theorem \ref{GinibreTransformation} was inspired by earlier results by 
Akemann et al.\ \cite{AIK, AKW} on products of random matrices. In these papers the authors considered  products of 
complex Ginibre matrices (that is, $X$ is also a complex Ginibre matrix) and 
they obtained the structure \eqref{g-pol-ensemble}--\eqref{gk-definition},
where in this case the functions $g_k$ in \eqref{g-pol-ensemble} are expressed as Meijer G-functions.
This result has since then been used in \cite{KuZh,LWZ} to determine the large $n$ scaling limit
of the correlation kernel at the hard edge, and in \cite{ABK} to calculate the Lyaponov 
exponents as the number of matrices in the product tends to infinity. 
See also \cite{For4,Strahov, Zhang} for other recent results on singular values of products
of random matrices.

\subsection{Product with a truncated unitary matrix}

Theorem \ref{GinibreTransformation} has an extension to a product
with a truncated unitary matrix. A truncation  $T$ of a matrix $U$ is a 
principal submatrix of $U$. We
assume that $U$ is a Haar distributed random unitary matrix and then $T$ is also a random matrix.

\begin{theorem} \label{TruncationTransformation}
Let $n, m, l, \nu$ be non-negative integers with $n \leq l \leq m $ and $m \geq n + \nu +1$. 
Let $T$ be an $(n+\nu) \times l$ truncation of a Haar distributed
unitary matrix $U$ of size $m \times m$.
Let $X$ be a random matrix of size $l \times n$, independent of $U$,
such that the squared singular values $x_1, \ldots, x_n$ of $X$ are a polynomial ensemble \eqref{f-pol-ensemble} 
for certain functions $f_1, \ldots, f_n$ defined on $[0,\infty)$.
Then the squared singular values $y_1, \ldots, y_n$ of $Y = TX$ are a polynomial ensemble 
\begin{equation} \label{g-pol-ensemble2} 
	\frac{1}{\tilde{Z}_n} \, \Delta_n(y) \, \det \left[ g_{k}(y_j) \right]_{j,k=1}^n, \qquad \text{ all } y_j > 0, 
	\end{equation}
where 
\begin{equation} \label{gk-definition2} 
	g_k(y) = \int_0^1 x^{\nu} (1-x)^{m-n-\nu-1} f_k \left( \frac{y}{x} \right) \frac{dx}{x}, \qquad y > 0.
	\end{equation}
\end{theorem}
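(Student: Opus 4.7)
My approach mirrors the proof of Theorem \ref{GinibreTransformation} in \cite{KuSt}, with the Jacobi-type matrix density of a truncated Haar unitary replacing the Gaussian density of the Ginibre matrix. The first step is to exploit the bi-unitary invariance of $T$: for any $W \in U(n+\nu)$ and $V \in U(l)$, the block-diagonal matrices $\mathrm{diag}(W, I_{m-n-\nu})$ and $\mathrm{diag}(V, I_{m-l})$ preserve the Haar measure on $U(m)$, so $T \stackrel{d}{=} W T V$. Using the singular value decomposition $X = U_1 \Sigma U_2^*$, where $\Sigma$ is the $l \times n$ matrix with $\sqrt{x_j}$ on the diagonal and zeros elsewhere, one reduces the problem to computing the distribution of the squared singular values of $Y = T_1 D$, where $T_1$ is the top-left $(n+\nu) \times n$ block of $U$ and $D = \mathrm{diag}(\sqrt{x_1}, \ldots, \sqrt{x_n})$.

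The main computation is the conditional density of the squared singular values of $Y = T_1 D$ given $x_1, \ldots, x_n$. The truncation $T_1$ has a matrix density proportional to a power of $\det(I_n - T_1^* T_1)$ (the classical Jacobi-type density of truncations). After the change of variables $T_1 = Y D^{-1}$, the singular value decomposition $Y = W \Lambda V^*$, and integration over the Haar measures on the unitary factors $W$ and $V$ using an Itzykson--Zuber-type identity suited to the determinantal weight, one should obtain
\[
p(y \mid x) = c_n \, \frac{\Delta_n(y)}{\Delta_n(x)} \det\bigl[ H(x_j, y_k)\bigr]_{j,k=1}^n
\]
with $H(x, y) = y^\nu x^{n-m}(x - y)^{m - n - \nu - 1} \mathbf{1}_{0 < y < x}$, as a direct calculation confirms. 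Multiplying by the prior $\tfrac{1}{Z_n}\Delta_n(x)\det[f_k(x_j)]$ and integrating out $x_1, \ldots, x_n$, the $\Delta_n(x)$-factors cancel and Andr\'eief's identity collapses the product of two determinants into $\det\bigl[\int H(x, y_k) f_k(x)\, dx\bigr]$. The substitution $x = y/t$ identifies $\int_0^\infty H(x, y) f_k(x)\, dx$ with $g_k(y)$ as in \eqref{gk-definition2}, yielding the polynomial ensemble \eqref{g-pol-ensemble2}.

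The principal obstacle lies in the integration over $V \in U(n)$ in the conditional density calculation. Unlike the Ginibre case, where the classical Harish-Chandra--Itzykson--Zuber formula handles $\int_{U(n)} \exp(-\mathrm{tr}(\cdot))\, dV$ in closed form, the Jacobi-type weight $\det(I - T_1^* T_1)^{\alpha}$ requires a less standard determinantal identity to produce the kernel $H$. A secondary subtlety is the regime $n + \nu + 1 \leq m < 2n + \nu$, where the matrix density of $T_1$ is supported on a proper submanifold of $(n+\nu) \times n$ complex matrices (some singular values of $T_1$ are pinned at $1$); in that range the analysis can be recast via the dual truncation $T_1'$ of size $(m - n - \nu) \times n$ formed from the complementary rows of the same $n$ columns of $U$, whose matrix density is well-behaved, and the identity $T_1^* T_1 = I_n - T_1'^* T_1'$ transfers the computation to $T_1'$ and produces the same kernel $H$.
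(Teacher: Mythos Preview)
Your outline is the direct approach of \cite{KKS}, which the paper cites for its first proof; the conditional kernel $H(x,y)=y^{\nu}x^{n-m}(x-y)^{m-n-\nu-1}\mathbf{1}_{0<y<x}$ is correct, and the substitution $x=y/t$ does recover \eqref{gk-definition2}. The paper, however, also writes out a second, self-contained proof (immediately after Theorem~\ref{PosDefRestriction}) that takes a genuinely different and more elementary route. It embeds $XX^*$ as the nonzero block of the $m\times m$ positive semidefinite matrix $\tilde X=\diag(XX^*,0)$ and observes that $TXX^*T^*$ is exactly the $(n+\nu)\times(n+\nu)$ principal submatrix of $U\tilde XU^*$. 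The problem thus becomes one of iterated matrix restriction, handled one row and column at a time via Proposition~\ref{varBaryshnikov} (a confluent limit of Baryshnikov's interlacing density). Each single step is a Mellin convolution with $\chi_k(x)=x^k\mathbf{1}_{0<x<1}$, and the composite $\chi_\nu\ast\chi_{\nu+1}\ast\cdots\ast\chi_{m-n-1}$ evaluates to $x^{\nu}(1-x)^{m-n-\nu-1}$ on $(0,1)$, giving \eqref{gk-definition2} directly. This route never needs the Itzykson--Zuber-type integral against the Jacobi weight that you correctly flag as the principal obstacle, and it requires no separate treatment of the regime $m<2n+\nu$ since the interlacing argument is insensitive to whether the truncation density is absolutely continuous. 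Your approach, by contrast, reaches the answer in a single stroke once that unitary integral is in hand, and it makes the structural parallel with Theorem~\ref{GinibreTransformation} explicit; but the work is concentrated in exactly the step you left as ``a direct calculation confirms.''
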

\begin{proof}
See \cite{KKS}.
\end{proof}

If we let $m \to \infty$ in Theorem \ref{TruncationTransformation}, then $\sqrt{m} \, T$ tends
in distribution to a complex Ginibre matrix. Also $(1- \tfrac{x}{m})^{m-n-\nu-1}$ tends 
to $e^{-x}$ as $ m \to \infty$. In this way Theorem~\ref{GinibreTransformation} can be obtained
as a limiting case of Theorem \ref{TruncationTransformation}.

Theorems \ref{GinibreTransformation} and \ref{TruncationTransformation} can be used repeatedly
and it follows that the squared singular values of a product of
any number of Ginibre
matrices with any number of truncated unitary matrices are a polynomial ensemble.

\subsection{Overview of the rest of the paper}
Inspired by these results we give an overview of other transformations that preserve
polynomial ensembles. The transformations are based on known random
matrix theory calculations, see \cite{For,FR}, and our aim here is to 
emphasize the interpretation as a transformation of polynomial ensembles. 

The first such transformation comes from matrix restrictions.
Here we are working with a Hermitian matrix $X$ and we remove one row and one column
to obtain $Y$. If $X$ is random with eigenvalues that are distributed as a polynomial ensemble 
then the eigenvalues of $Y$ are also distributed as a polynomial ensemble. This is our
first result, see Theorem \ref{SubmatrixRestriction}. The proof relies on a fundamental
result of Baryshnikov \cite{Bar}, see Theorem \ref{Baryshnikov} below.

Then we extend this to the situation where $X$ is a positive semidefinite matrix with 
a fixed number of zero eigenvalues. 
Again we find that matrix restriction for random matrices of this type leads to a transformation result for
polynomial ensembles, see  Theorem \ref{PosDefRestriction}.
Interestingly enough, we can make a connection with the product with a truncated
unitary matrix, as we find in this way an alternative proof for Theorem \ref{TruncationTransformation}.

In Section \ref{sec:rankone} we consider a transformation from $X$ to $Y = X + vv^*$ where
$X$ is Hermitian, and $v$ is a column vector of independent complex Gaussian entries.
This rank-one modification is also a transformation of polynomial ensembles as we show 
in Proposition \ref{SmallRankModification}. The argument is based on a result of \cite{FN}.

Finally, in Section \ref{sec:extension} we consider a transformation where we extend
the Hermitian matrix $X$ by adding an extra column $v$ with independent complex Gaussians, and
an extra row $\begin{pmatrix} v^* & c \end{pmatrix}$ consisting of $v^*$ and a
real  number $c$ that has a real normal distribution. Under appropriate conditions
on the variances, we again find a transformation of  polynomial ensembles, 
see Proposition \ref{HermitianExtension}. This is based on \cite{AvMW,For2}.


\section{Matrix restrictions} \label{sec:restriction}

Let $X$ be an $n \times n$ Hermitian matrix with distinct eigenvalues $x_1 < x_2 < \cdots < x_n$.
Let $U$ be a Haar distributed unitary matrix of size $n \times n$ and let $Y$ be the $(n-1) \times (n-1)$
principal submatrix of $U X U^*$ with eigenvalues $y_1 \leq y_2 \leq \cdots \leq y_{n-1}$. With probability
one we have strict interlacing of eigenvalues
\begin{equation} \label{xy-interlacing}
	x_1 < y_1 < x_2 < y_2 < \cdots < y_{n-1} < x_n. 
	\end{equation}
The following theorem is due to Baryshnikov (reformulation of \cite[Proposition 4.2]{Bar}).

\begin{theorem} \label{Baryshnikov}
If $X$ and $Y$ are as above, then the (random) eigenvalues $y_1, \ldots, y_{n-1}$ of $Y$ have 
the joint density 
\begin{equation} \label{y-density} 
	(n-1)! \, \frac{\Delta_{n-1}(y)}{\Delta_n(x)} \end{equation}
on the subset of $\mathbb R^{n-1}$ defined by the inequalities \eqref{xy-interlacing}.
\end{theorem}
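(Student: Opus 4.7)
The plan is to derive the density by comparing two parametrizations of the Lebesgue measure on the space of Hermitian matrices. By unitary invariance we may take $X = \diag(x_1, \ldots, x_n)$, so $M := U X U^*$ is Haar-rotated. I would write $M$ in block form
$$M = \begin{pmatrix} Y & b \\ b^* & c \end{pmatrix}, \qquad b \in \C^{n-1},\ c \in \R,$$
diagonalize $Y = V \diag(y_1, \ldots, y_{n-1}) V^*$, and introduce the rotated vector $\tilde b = V^* b$. The Schur complement identity for the characteristic polynomial of $M$ yields the secular equation
$$\prod_{i=1}^n (\lambda - x_i) = \prod_{j=1}^{n-1}(\lambda - y_j) \left( \lambda - c - \sum_{j=1}^{n-1} \frac{|\tilde b_j|^2}{\lambda - y_j} \right),$$
from which evaluating at $\lambda = y_j$ and comparing leading subcoefficients expresses $s_j := |\tilde b_j|^2$ and $c$ as explicit rational functions of $(x,y)$, positive precisely in the strict interlacing region \eqref{xy-interlacing}.

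Next I would apply the Weyl integration formula to both $M$ and $Y$: $dM \propto \Delta_n(x)^2\, dx\, d\mu_n$ and $dY \propto \Delta_{n-1}(y)^2\, dy\, d\mu_{n-1}$, where $d\mu_k$ denotes Haar measure on $U(k)/T^k$. Combining this with the block decomposition $dM = dY\, d^2 b\, dc$, the Lebesgue-preserving substitution $b \mapsto \tilde b$, and polar coordinates $d^2 \tilde b = 2^{-(n-1)} \prod_j ds_j\, d\theta_j$ will produce an identity of measures on Hermitian matrices expressed in two coordinate systems: $(x, U)$ on one side and $(y, V, s, \theta, c)$ on the other.

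The crux is the Jacobian of the change of variables $(s, c) \mapsto x$ with $y$ held fixed. Implicitly differentiating the secular equation at $\lambda = y_j$ yields $\partial s_j / \partial x_i = s_j / (x_i - y_j)$, while the trace relation gives $\partial c / \partial x_i = 1$. Factoring $s_j$ out of the $j$-th row reduces the determinant to a Cauchy matrix augmented by a row of ones; by the Cauchy determinant formula together with the identity $\prod_j s_j = \prod_{i,j} |x_i - y_j| / \Delta_{n-1}(y)^2$ (read off from the explicit formula for $s_j$), one finds
$$\left| \det \frac{\partial(s, c)}{\partial x} \right| = \frac{\Delta_n(x)}{\Delta_{n-1}(y)}.$$
Substituting this, cancelling the factor $\Delta_n(x)\, dx$ common to both sides of the measure identity, and integrating out the uniform variables $V$ and $\theta$ against their Haar measures will leave a marginal density for $y$ proportional to $\Delta_{n-1}(y) / \Delta_n(x)$ supported on \eqref{xy-interlacing}. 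The explicit constant $(n-1)!$ is then pinned down by the classical volume identity $\int_R \Delta_{n-1}(y)\, dy = \Delta_n(x)/(n-1)!$, with $R$ the ordered interlacing region, which can be proved by induction on $n$.

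The main obstacle I anticipate is the Jacobian calculation: though algebraically routine once set up, it demands careful sign-tracking through the Cauchy determinant in the presence of the interlacing conventions, and the normalization constants arising from the two Weyl decompositions of $dM$ must be matched precisely. Verifying the final prefactor through the volume identity provides a convenient independent sanity check.
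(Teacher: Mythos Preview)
Your proposal is a correct and self-contained proof via the block decomposition and Jacobian computation; the algebra you outline (secular equation, Cauchy-type determinant, and the volume identity for the interlacing simplex) all goes through as stated.

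The paper, however, does not prove this statement at all: it is quoted as a known result and attributed to Baryshnikov \cite{Bar}, with no argument given. So there is nothing to compare at the level of the paper itself. If one traces back to the cited source, Baryshnikov's original argument proceeds rather differently, exploiting the Harish--Chandra/Itzykson--Zuber integral and the combinatorics of Gelfand--Tsetlin patterns rather than a direct Jacobian calculation. Your approach is the more hands-on ``random matrix'' route: it is elementary in the sense of requiring only linear algebra and calculus, and it makes the interlacing constraint appear naturally from the positivity of the $s_j$. The trade-off is exactly what you flag at the end---bookkeeping of signs and normalization constants through the two Weyl decompositions is tedious, whereas the representation-theoretic route gets the constant $(n-1)!$ essentially for free from the combinatorics. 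Your use of the integral identity $\int_R \Delta_{n-1}(y)\,dy = \Delta_n(x)/(n-1)!$ as an independent check on the prefactor is a sensible way to sidestep that bookkeeping.
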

	
The interlacing condition is expressed by the determinant
\begin{equation} \label{interlacing-det} 
	\det \left[ \chi_{x_k \leq y_j} \right]_{j,k=1}^n, \qquad \chi_{x \leq y} = \begin{cases} 1 & \text{ if } x \leq y \\
	0 & \text{ otherwise}, \end{cases} \end{equation}
with $y_n := +\infty$. Indeed, for all mutually distinct values $x_k$ and $y_j$, 
the determinant in \eqref{interlacing-det} is $1$ if and only if the interlacing condition holds 
and it is zero otherwise. The determinant in \eqref{interlacing-det} has all ones in the
last row. We can reduce it to an $(n-1) \times (n-1)$ determinant by subtracting the last column from
every other column, and expanding along the last row. This results in the determinant   
$\det \left[ \chi_{x_k \leq y_j < x_n} \right]_{j,k=1}^{n-1}$. 
It means that the density \eqref{y-density} can be written as  
\begin{multline} \label{y-density2}
	\frac{\Delta_{n-1}(y)}{\Delta_n(x)} \, \det\left[ \chi_{x_k \leq y_j} \right]_{j,k=1}^n 
	=\frac{\Delta_{n-1}(y)}{\Delta_n(x)} \,  \det \left[ \chi_{x_k \leq y_j < x_n} \right]_{j,k=1}^{n-1},  \\
	\qquad y = (y_1, \ldots, y_{n-1}) \in \mathbb R^{n-1}, \quad y_n := +\infty,
	\end{multline}
where \eqref{y-density2} is  considered as a probability density on the unordered eigenvalues of $Y$, i.e.,
as a probability density on $\mathbb R^{n-1}$. This accounts for the disappearance of the factor $(n-1)!$ 
from \eqref{y-density}.
Note that \eqref{y-density2} is a polynomial
ensemble with functions $y \mapsto \chi_{x_k \leq y < x_n}$, for $k=1,2, \ldots, n-1$.

Let us now assume that $X$ is random, independent of $U$, and that the eigenvalues of $X$ are a polynomial ensemble. 
Then the eigenvalues of $Y$ are again a polynomial ensemble. For this it is important that 
the normalization constant  $\frac{1}{\Delta_n(x)}$ in \eqref{y-density2} depends on $X$ via the 
Vandermonde determinant $\Delta_n(x)$ in the denominator. We also need the  Andreief identity,
 see \cite[Chapter 3]{DeGi},
\begin{multline} \label{Andreief-identity} 
	\int_{\mathcal X^n} \det \left[ \phi_k(x_j) \right]_{j,k=1}^n \, 
	\det\left[ \psi_k(x_j) \right]_{j,k=1}^n \, d\mu(x_1) \cdots d\mu(x_n) \\
 = n! \det \left[ \int_{\mathcal X} \phi_j(x) \psi_k(x) d\mu(x) \right]_{j,k=1}^n 
\end{multline}
where $\mu$ is  a measure on a space $\mathcal X$, $n \in \mathbb N$, and $\phi_1, \ldots, \phi_n$, $\psi_1, \ldots, \psi_n$
are arbitrary functions such that the integrals converge.
The result is the following.

\begin{theorem} \label{SubmatrixRestriction}
Suppose that $X$ is a random $n \times n$ Hermitian matrix whose eigenvalues
are a polynomial ensemble \eqref{f-pol-ensemble} with certain functions $f_1, \ldots, f_n$. 
Let $Y$ be the principal submatrix of $U X U^*$ of size $(n-1) \times (n-1)$, 
where $U$ is a Haar distributed  unitary matrix, independent of $X$.
Then the eigenvalues $y_1, \ldots, y_{n-1}$ of $Y$ are a polynomial ensemble
\begin{equation} \label{g-pol-ensemble3} 
	\frac{1}{\tilde{Z}_{n-1}} \, \Delta_{n-1}(y) \, \det \left[ g_{k}(y_j) \right]_{j,k=1}^{n-1}, 
	\end{equation}
with
\begin{equation} \label{gk-definitions3} 
	g_k(y) = \int_{-\infty}^y \tilde{f}_k(x) \, dx, \qquad k=1, \ldots, n-1,
	\end{equation}
where $\tilde{f}_1, \ldots, \tilde{f}_{n-1}$ are a basis for the vector space
\begin{equation} \label{subspace} 
	\{ f = \sum_{j=1}^n c_j f_j \mid c_1, \ldots, c_n \in \mathbb R, \, \int_{-\infty}^{\infty} f(x) \, dx = 0 \}. 
	\end{equation}
\end{theorem}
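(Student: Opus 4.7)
The plan is to obtain the density of the eigenvalues of $Y$ by integrating out the eigenvalues of $X$ against the conditional density supplied by Theorem~\ref{Baryshnikov}, and then to reshape the resulting determinant into the polynomial-ensemble form using Andreief together with a well-chosen basis change.

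First I would start from the joint density
\[
\rho_Y(y) \;=\; \int_{\mathbb{R}^n} \mathcal{P}(x_1,\ldots,x_n)\,\frac{\Delta_{n-1}(y)}{\Delta_n(x)}\,\det\!\bigl[\chi_{x_k\le y_j}\bigr]_{j,k=1}^n\,dx_1\cdots dx_n,
\qquad y_n:=+\infty,
\]
coming from \eqref{y-density2} and the polynomial ensemble \eqref{f-pol-ensemble} for $X$. The factor $\Delta_n(x)$ cancels against the same factor in the denominator of \eqref{y-density2}, which is exactly the point of using Baryshnikov's theorem here. I am left with an integral of the product of two determinants, to which I apply the Andreief identity \eqref{Andreief-identity}. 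With $\psi_k(x)=\chi_{x\le y_k}$ this yields
\[
\rho_Y(y) \;=\; \frac{n!\,\Delta_{n-1}(y)}{Z_n}\,\det\!\left[\,F_j(y_k)\,\right]_{j,k=1}^n,
\qquad F_j(y):=\int_{-\infty}^y f_j(t)\,dt,\quad y_n=+\infty.
\]

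Next I would eliminate the spurious last column, whose entries are the constants $c_j:=F_j(+\infty)=\int_{-\infty}^\infty f_j$. The key algebraic observation is that the map $L:\operatorname{span}\{f_1,\ldots,f_n\}\to\mathbb{R}$, $f\mapsto\int_{-\infty}^\infty f\,dx$, cannot be the zero map, for otherwise Andreief would force $Z_n=0$ in \eqref{f-pol-ensemble}; hence $\ker L$ has dimension exactly $n-1$. Because the density \eqref{f-pol-ensemble} depends only on the span of the $f_k$'s, I may replace $f_1,\ldots,f_n$ by any basis of that span, modifying $Z_n$ only by a nonzero scalar. I choose a basis $\tilde f_1,\ldots,\tilde f_{n-1},\tilde f_n$ such that $\tilde f_1,\ldots,\tilde f_{n-1}$ span $\ker L$ (i.e.\ lie in \eqref{subspace}) while $\int\tilde f_n\,dx=c\neq 0$. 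Then the last column of $[\tilde F_j(y_k)]_{j,k=1}^n$ becomes $(0,\ldots,0,c)^{T}$, and cofactor expansion along it gives
\[
\det\!\bigl[\tilde F_j(y_k)\bigr]_{j,k=1}^n
=c\,\det\!\bigl[\tilde F_j(y_k)\bigr]_{j,k=1}^{n-1}
=c\,\det\!\bigl[g_k(y_j)\bigr]_{j,k=1}^{n-1},
\]
where $g_k(y)=\tilde F_k(y)=\int_{-\infty}^y \tilde f_k(x)\,dx$ as in \eqref{gk-definitions3}, and the last equality is by transposing.

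Substituting back produces the stated polynomial ensemble \eqref{g-pol-ensemble3} with normalization $\tilde Z_{n-1}=Z_n/(n!\,c)$. The main obstacle is the justification of the basis change: I need to argue cleanly that a nonzero component of $L$ must exist (otherwise the original ensemble is degenerate) and that the construction of $\tilde f_1,\ldots,\tilde f_{n-1}$ is intrinsic in the sense that any basis of the subspace \eqref{subspace} yields the same ensemble \eqref{g-pol-ensemble3} up to the normalization constant, since bases differ by an invertible linear transformation acting on the columns of $\det[g_k(y_j)]$. Everything else is a routine application of Andreief and cofactor expansion.
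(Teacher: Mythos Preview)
Your argument is correct and follows the paper's proof essentially step for step: average Baryshnikov's conditional density \eqref{y-density2} over the polynomial ensemble so that $\Delta_n(x)$ cancels, apply Andr\'eief to obtain an $n\times n$ determinant with a row (in your transposed convention, a column) of constants $\int_{-\infty}^{\infty}f_k$, then pass to a basis adapted to the subspace \eqref{subspace} and expand along that row/column. Your additional remarks---that $L$ cannot vanish identically (lest $Z_n=0$) and that any basis of \eqref{subspace} yields the same ensemble up to normalization---are useful clarifications the paper leaves implicit.
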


\begin{proof}
From  \eqref{y-density2} it follows after averaging over the polynomial ensemble \eqref{f-pol-ensemble} 
that the eigenvalues of $Y$ have joint density
\[   \frac{1}{n! Z_n} \Delta_{n-1}(y) 	\int_{\mathbb R^n}
	\det\left[ \chi_{x_k \leq y_j} \right]_{j,k=1}^n \det \left[ f_k(x_j) \right]_{j,k=1}^n \, dx_1 \cdots dx_n \]
	with $y_n := +\infty$.
Because of \eqref{Andreief-identity} we find	that this is
\begin{equation} \label{y-density3}   
	\frac{1}{Z_n} \Delta_{n-1}(y) \det \left[	\int_{-\infty}^{y_j}  f_k(x) dx \right]_{j,k=1}^n. 
	\end{equation}
Since $y_n = +\infty$, the last row of the second determinant in \eqref{y-density3} contains the constants
$\int_{-\infty}^{\infty} f_k(x) dx$. Change from $f_1, \ldots, f_n$ to another basis $\tilde{f}_1, \ldots \tilde{f}_n$
where $\tilde{f}_1, \ldots, \tilde{f}_{n-1}$ belong to the subspace \eqref{subspace}. Then
after performing suitable column operations we obtain the density
\[   \frac{1}{\tilde{Z}_{n-1}} \Delta_{n-1}(y) \det \left[	\int_{-\infty}^{y_j}  \tilde{f}_k(x) dx \right]_{j,k=1}^n. \]
Then $\int_{-\infty}^{\infty} \tilde{f}_n(x) dx \neq 0$, since otherwise the full last row 
in the determinant would be zero.
By expanding the determinant along the last row we obtain \eqref{g-pol-ensemble3} with
functions \eqref{gk-definitions3} and a possibly different constant $\tilde{Z}_{n-1}$.
\end{proof}

An analogous result holds for singular values.

\begin{corollary} \label{SubmatrixRestriction2}
Suppose that $X$ is random $(n+\nu) \times n$ matrix whose squared singular values 
are a polynomial ensemble \eqref{f-pol-ensemble}. Let $Y$ be the $(n+ \nu) \times (n-1)$
principal submatrix of $X U$ where $U$ is Haar distributed unitary matrix, independent of $X$.
Then the squared singular values $y_1, \ldots, y_{n-1}$ of $Y$ are a polynomial ensemble
\begin{equation} \label{g-pol-ensemble4} 
	\frac{1}{\tilde{Z}_{n-1}} \, \Delta_{n-1}(y) \, \det \left[ g_{k}(y_j) \right]_{j,k=1}^{n-1},  \qquad \text{ all } y_j > 0,
	\end{equation}
with
\begin{equation} \label{gk-definitions4} 
	g_k(y) = \int_{0}^y \tilde{f}_k(x) dx, \qquad k=1, \ldots, n-1 
	\end{equation}
where $\tilde{f}_1, \ldots, \tilde{f}_{n-1}$ are a basis for the vector space
\begin{equation} \label{subspace2} 
	\{ f = \sum_{j=1}^n c_j f_j \mid c_1, \ldots, c_n \in \mathbb R, \, \int_{0}^{\infty} f(x) dx = 0 \}. 
	\end{equation}
\end{corollary}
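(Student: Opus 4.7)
The plan is to reduce Corollary \ref{SubmatrixRestriction2} to Theorem \ref{SubmatrixRestriction} by passing from the squared singular values of $Y$ to the eigenvalues of the Gram matrix $Y^*Y$. First I would observe that if $Y$ denotes the first $n-1$ columns of $XU$, then
\[
Y^*Y = \bigl[\, U^*(X^*X)\, U \,\bigr]_{(n-1) \times (n-1)},
\]
that is, the leading $(n-1) \times (n-1)$ principal submatrix of $U^* H U$ with $H := X^*X$. Hence the squared singular values of $Y$ are exactly the eigenvalues of this principal submatrix.

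Next I would invoke Theorem \ref{SubmatrixRestriction} applied to $H$. By hypothesis the eigenvalues of $H$ form a polynomial ensemble with functions $f_1, \ldots, f_n$ supported on $[0,\infty)$; since $U$ is Haar distributed and independent of $X$, the matrix $U^*$ is also Haar distributed and independent of $H$. Thus Theorem \ref{SubmatrixRestriction}, applied with $U$ there replaced by $U^*$ (which has the same distribution), delivers a polynomial ensemble for the eigenvalues of the principal $(n-1)\times(n-1)$ submatrix of $U^* H U$, with functions of the form $g_k(y) = \int_{-\infty}^{y} \tilde f_k(x)\, dx$ and $\tilde f_1, \ldots, \tilde f_{n-1}$ a basis for the subspace \eqref{subspace}.

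Finally I would specialize to the positive-support setting. Since each $f_k$ vanishes on $(-\infty,0)$, the same is true of every linear combination, so the lower limit $-\infty$ in \eqref{gk-definitions3} collapses to $0$, giving \eqref{gk-definitions4}; correspondingly the condition $\int_{-\infty}^{\infty} f\, dx = 0$ defining \eqref{subspace} becomes $\int_0^{\infty} f\, dx = 0$, which is exactly \eqref{subspace2}. I do not anticipate a genuine obstacle here: the entire argument is a bookkeeping reduction once the Gram identity above is recorded, with the only minor subtlety being to note that $U^*$ is itself Haar distributed (immediate from the inversion-invariance of Haar measure on the unitary group) and independent of $H$ (inherited from independence of $U$ and $X$).
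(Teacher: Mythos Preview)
Your proof is correct and follows essentially the same approach as the paper: reduce to Theorem~\ref{SubmatrixRestriction} via the identity $Y^*Y = [U^*(X^*X)U]_{(n-1)\times(n-1)}$, then note that the lower limits of integration become $0$ since the $f_k$ live on $[0,\infty)$. If anything, you are more careful than the paper, which simply asserts that ``$Y^*Y$ is the principal submatrix of size $(n-1)\times(n-1)$ of $X^*X$'' without making the role of $U$ (or the Haar invariance under $U\mapsto U^*$) explicit.
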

\begin{proof}
We can apply Theorem \ref{SubmatrixRestriction} since
$Y^*Y$ is the principal submatrix of size $(n-1) \times (n-1)$ of $X^*X$.
The integration in \eqref{gk-definitions4} and \eqref{subspace2} starts at $0$ since the functions are defined
for $x \geq 0$ only.
\end{proof}

\section{Restrictions of positive semidefinite matrices} \label{sec:posdefrestriction}

The following is a variation on Theorem \ref{Baryshnikov}. It can  also be obtained
as a special case of \cite[Corollary 1]{FR}.

\begin{proposition} \label{varBaryshnikov}
Let $m \geq n +1$ and let $X$ be an $m \times m$ positive semidefinite  Hermitian matrix with 
$n$ simple non-zero eigenvalues $ 0 < x_1 < x_2 < \cdots < x_n$
and an eigenvalue $0$ of multiplicity $m-n \geq 1$.
Let $Y$ be the $(m-1) \times (m-1)$ principal submatrix of $U X U^*$ where $U$ is a Haar distributed 
unitary matrix of size $m \times m$.
Then with probability one, $Y$ has exactly $n$ non-zero eigenvalues $0 < y_1 < y_2 < \cdots < y_n$ 
that satisfy the inequalities
\begin{equation} \label{xy-inequalities} 
	0  < y_1 <  x_1 < y_2 < x_2 < \cdots < x_{n-1} < y_n < x_n,  
	\end{equation}
	and these non-zero eigenvalues have the joint density
\begin{equation} \label{y-density4} 
	\frac{(m-1)!}{(m-n-1)!} \, \prod_{k=1}^n \frac{y_k^{m-n-1}}{x_k^{m-n}}  \frac{ \Delta_n(y)}{\Delta_n(x)}
	\end{equation}
restricted to the  subset of $y = (y_1, \ldots, y_n) \in \mathbb R^{n}$ 
defined by the inequalities \eqref{xy-inequalities}.
\end{proposition}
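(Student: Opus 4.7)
The plan is to deduce this from Baryshnikov's theorem (Theorem~\ref{Baryshnikov}) via a perturbation argument that first resolves the zero eigenvalue of multiplicity $m-n$ into $m-n$ distinct small positive eigenvalues and then lets them collapse back to $0$. Concretely, I would fix auxiliary constants $0 < a_1 < \cdots < a_{m-n}$ and, for small $\varepsilon>0$, replace $X$ by a matrix $X_\varepsilon$ with simple spectrum $\{\varepsilon a_1,\ldots,\varepsilon a_{m-n},x_1,\ldots,x_n\}$. Let $Y_\varepsilon$ be the corresponding $(m-1)\times(m-1)$ principal submatrix of $UX_\varepsilon U^*$, with eigenvalues $z_1<\cdots<z_{m-1}$. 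Theorem~\ref{Baryshnikov} then provides the joint density $(m-1)!\,\Delta_{m-1}(z)/\Delta_m(\varepsilon a_1,\ldots,\varepsilon a_{m-n},x_1,\ldots,x_n)$ on the strict interlacing region of the combined spectrum.

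I would next rescale the small eigenvalues by setting $z_i=\varepsilon b_i$ for $i=1,\ldots,m-n-1$ and $y_k=z_{m-n-1+k}$ for $k=1,\ldots,n$. The two Vandermondes factor as
\[
\Delta_m=\varepsilon^{\binom{m-n}{2}}\Delta_{m-n}(a)\prod_{i,k}(x_k-\varepsilon a_i)\,\Delta_n(x),
\]
\[
\Delta_{m-1}=\varepsilon^{\binom{m-n-1}{2}}\Delta_{m-n-1}(b)\prod_{i,k}(y_k-\varepsilon b_i)\,\Delta_n(y),
\]
and the change of variables contributes a Jacobian $\varepsilon^{m-n-1}$. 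Since $\binom{m-n-1}{2}+(m-n-1)=\binom{m-n}{2}$, all powers of $\varepsilon$ cancel, and as $\varepsilon\to 0$ the joint density of $(b,y)$ converges pointwise to
\[
(m-1)!\,\frac{\Delta_{m-n-1}(b)}{\Delta_{m-n}(a)}\prod_{k=1}^n\frac{y_k^{m-n-1}}{x_k^{m-n}}\,\frac{\Delta_n(y)}{\Delta_n(x)},
\]
supported on the product of the $b$-interlacing region $\{a_1<b_1<\cdots<b_{m-n-1}<a_{m-n}\}$ and the $y$-interlacing region \eqref{xy-inequalities}.

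To extract the marginal in $y$ I would apply Theorem~\ref{Baryshnikov} a second time, to a hypothetical Hermitian matrix with spectrum $\{a_1,\ldots,a_{m-n}\}$: it tells us that $(m-n-1)!\,\Delta_{m-n-1}(b)/\Delta_{m-n}(a)$ is a probability density on the $b$-interlacing region, so integration in $b$ yields the factor $1/(m-n-1)!$. Multiplying through recovers exactly \eqref{y-density4}, and the strict interlacing \eqref{xy-inequalities} emerges as the surviving part of the original interlacing for $z$.

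The main obstacle is justifying the limit $\varepsilon\to 0$ rigorously. One must argue that $Y_\varepsilon \to Y$ in distribution, that the $m-n-1$ smallest eigenvalues of $Y_\varepsilon$ collapse into the kernel of $Y$, and that the pointwise convergence of densities is enough to conclude convergence of the marginal distribution of the non-zero eigenvalues. The key structural fact underlying this bookkeeping is that $\operatorname{rank}(Y)\leq\operatorname{rank}(X)=n$, with equality almost surely whenever $m\geq n+1$ (since Haar-rotating the range of $X$ and then restricting to any $(m-1)$-dimensional coordinate subspace almost surely preserves full column rank), so $Y$ has exactly $n$ non-zero eigenvalues with probability one and the limit is consistent.
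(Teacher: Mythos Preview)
Your approach is essentially the same as the paper's: both perturb the repeated zero eigenvalue into $m-n$ distinct small eigenvalues, apply Theorem~\ref{Baryshnikov}, integrate out the $m-n-1$ small eigenvalues of the submatrix, and pass to the limit. Your use of a scaling parameter $\varepsilon$ with fixed $a_j$ and your observation that the $b$-integral equals $1/(m-n-1)!$ because it is itself a Baryshnikov density (the paper simply quotes this identity as \eqref{ab-integral}) are cosmetic refinements, and your explicit rank argument for why $Y$ has exactly $n$ nonzero eigenvalues is a detail the paper leaves implicit; the limiting step is handled at the same informal level in both.
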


\begin{proof} For $m=n+1$ this follows immediately from Theorem \ref{Baryshnikov}
and so we assume in the proof that $m \geq n+2$.
We approximate $X$ by a matrix $A$ with eigenvalues $a_1 < \cdots < a_{m-n} < x_1 < \cdots < x_n$
with $a_j$'s close to zero. Let $B$ be the principal submatrix of 
$U A U^*$ of size $(m-1) \times (m-1)$, which with probability one
 has distinct eigenvalues $b_1 < \cdots < b_{m-n-1} < y_1 < \cdots < y_n$ 
that interlace with the eigenvalues of $A$. By Theorem \ref{Baryshnikov} the  joint density
of these eigenvalues is  
\begin{equation} \label{ytilde-density} 
 (m-1)! \frac{ \Delta_{m-n-1}(b) \prod_{j,k} (y_k - b_j)}{ \Delta_{m-n}(a) \prod_{j,k} (x_k-a_j)}
	\frac{\Delta_n(y)}{\Delta_n(x)} \end{equation}
on the subset of $\mathbb R^m$ given by the interlacing relations.
The induced density on $y_1, \ldots, y_n$ is
\begin{equation} \label{ytilde-density2} 
	(m-1)! \left( \int_{a_1}^{a_2} db_1 \cdots \int_{a_{m-n-1}}^{a_{m-n}} db_{m-n-1} 
	\frac{ \Delta_{m-n-1}(b) \prod_{j,k} (y_k - b_j)}{\Delta_{m-n}(a) \prod_{j,k} (x_k-a_j)}
	\right) \frac{\Delta_n(y)}{\Delta_n(x)}. 
	\end{equation}
In the limit where all $a_j \to 0$, $j=1, \ldots, m-n$, we also have $b_j \to 0$, $j=1, \ldots, m-n-1$.
Then the factors	$\prod_{j,k} (y_k - b_j)$ and $\prod_{j,k} (x_k - a_j)$ in \eqref{ytilde-density2}
tend to $\prod_k y_k^{m-n-1}$ and $\prod_k x_k^{m-n}$, respectively.
The resulting $m-n-1$ fold integral can be evaluated as
\begin{equation} \label{ab-integral} 
	\int_{a_1}^{a_2} db_1 \ldots \int_{a_{m-n-1}}^{a_{m-n}} db_{m-n-1} 
	\frac{ \Delta_{m-n-1}(b)}{\Delta_{m-n}(a)}  = \frac{1}{(m-n-1)!} 
	\end{equation}
and this does not depend on $a_1, \ldots a_{m-n}$.
The result is the joint density \eqref{y-density4} for the non-zero eigenvalues of $Y$.
\end{proof}

The inequalities  \eqref{xy-inequalities} are encoded by the determinant
\[ \det \left[ \chi_{0 < y_j < x_k} \right]_{j,k=1}^n \]
which for strictly increasing $y_1 < y_2 < \cdots < y_n$ is $1$ if the interlacing
\eqref{xy-inequalities} holds and $0$ otherwise.
Then \eqref{y-density4} can be alternatively written as
\begin{equation} \label{y-density5} 
	\frac{(m-1)!}{n! (m-n-1)!} \, \prod_{k=1}^n \frac{y_k^{m-n-1}}{x_k^{m-n}}  \frac{ \Delta_n(y)}{\Delta_n(x)}
		\det \left[ \chi_{0 < y_j < x_k} \right]_{j,k=1}^n 
	\end{equation}
	which is now considered as a density on $[0,\infty)^n$ for unordered eigenvalues. Note that \eqref{y-density5} is
	a polynomial ensemble on $[0,\infty)$ with functions $y \mapsto y^{m-n-1} \chi_{0 < y < x_k}$ for $k =1, \ldots, n$.

\begin{theorem} \label{PosDefRestriction} Let $n \leq m-1$ and $\nu \leq m-n-1$ be positive integers.
Let $X$ be a random positive semidefinite Hermitian matrix of size $m\times m$ with a zero eigenvalue of 
multiplicity $m-n \geq 1$ and non-zero eigenvalues $x_1, \ldots, x_n$ that
are a polynomial ensemble \eqref{f-pol-ensemble} for certain functions $f_1, \ldots, f_n$ on $[0, \infty)$.
Let $Y$ be the principal submatrix of $U XU^*$ of size $(n+\nu) \times (n+\nu)$, where $U$ is a Haar distributed
unitary matrix, independent of $X$. Then, with probability one,
$Y$ has exactly $n$ non-zero eigenvalues $y_1, \ldots, y_n$, and these non-zero eigenvalues
are a polynomial ensemble 
\begin{equation} \label{g-pol-ensemble5} 
	\frac{1}{\tilde{Z}_{n}}  \Delta_n(y) \, \det\left[ g_k(y_j) \right]_{j,k=1}^n \qquad \text{all }  y_j > 0, 
	\end{equation}
where
\begin{equation} \label{gk-definition5} 
	g_k(y) =   \int_0^1 x^{\nu} (1-x)^{m-n-\nu-1} f_k \left( \frac{y}{x} \right) \frac{dx}{x}.
	\end{equation}
\end{theorem}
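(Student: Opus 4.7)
The plan is to prove the theorem by induction on $s = m - n - \nu$, the number of one-step reductions needed to pass from size $m$ to size $n+\nu$. Proposition \ref{varBaryshnikov}, in its determinantal form \eqref{y-density5}, combined with Andreief's identity \eqref{Andreief-identity}, will handle each single step, and Haar invariance will allow the iteration.

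For the base case $s = 1$, i.e., $\nu = m - n - 1$, I would start from the conditional density \eqref{y-density5} of the non-zero eigenvalues of $Y$ given $X$, multiply by the polynomial ensemble \eqref{f-pol-ensemble} for $x_1, \ldots, x_n$, and integrate. After absorbing $\prod_k x_k^{-(m-n)}$ into the second determinant, the two determinants combine via \eqref{Andreief-identity} into a single determinant (after a harmless transposition). Pulling $\prod_j y_j^{m-n-1}$ back inside the resulting determinant then yields the polynomial ensemble \eqref{g-pol-ensemble5} with
$$g_k^{(1)}(y) = y^{m-n-1} \int_y^\infty x^{-(m-n)} f_k(x)\, dx,$$
and the change of variables $u = y/x$ converts this into \eqref{gk-definition5} with $\nu = m-n-1$.

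For the inductive step ($s \geq 2$), I would exploit that $X_1$, the $(m-1) \times (m-1)$ principal submatrix of $U X U^*$, has a unitarily invariant distribution: conjugating $X_1$ by an independent Haar unitary of size $m-1$ and then taking the $(n+\nu) \times (n+\nu)$ principal submatrix gives the same joint distribution for $Y$ as extracting that submatrix directly from $U X U^*$. The base case applied to $X$ shows that the non-zero eigenvalues of $X_1$ form a polynomial ensemble with functions $g_k^{(1)}$, and the inductive hypothesis applied to $X_1$, with $m$ replaced by $m-1$ and $\nu$ unchanged (so that $s$ decreases to $s-1$), produces the desired polynomial ensemble structure for $Y$.

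What remains is to verify that iterating the one-step transformation $s$ times produces exactly the integral in \eqref{gk-definition5}. A direct induction on $s$, interchanging the order of integration at each step by Fubini, yields
$$g_k^{(s)}(y) = y^{(m-n)-s} \int_y^\infty u^{-(m-n)} \frac{(u-y)^{s-1}}{(s-1)!}\, f_k(u)\, du,$$
and setting $s = m - n - \nu$ and applying $u = y/x$ converts this into \eqref{gk-definition5} up to the constant $1/(m-n-\nu-1)!$, which is absorbed into $\tilde{Z}_n$. The main technical point is the careful bookkeeping in the iterated Fubini interchange; conceptually, the Beta kernel $x^\nu(1-x)^{m-n-\nu-1}$ emerges because the $s-1$ nested indefinite integrals collapse into a single power of $(u-y)$.
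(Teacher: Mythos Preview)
Your proposal is correct and follows essentially the same route as the paper: establish the single-step case $\nu=m-n-1$ via Proposition~\ref{varBaryshnikov} and the Andreief identity, then iterate. The only differences are presentational. The paper packages each one-step transformation as Mellin convolution with $\chi_k(x)=x^k\chi_{0<x<1}$ and then evaluates the iterated convolution $\chi_\nu\ast\chi_{\nu+1}\ast\cdots\ast\chi_{m-n-1}(x)=x^\nu(1-x)^{m-n-\nu-1}$ directly, whereas you carry out the same computation by hand in the $u$-variable via repeated Fubini, arriving at the power $(u-y)^{s-1}/(s-1)!$; these are the same identity viewed from two sides. You are in fact more explicit than the paper on one point: the justification that the intermediate submatrix $X_1$ is again unitarily invariant, so that the inductive hypothesis applies, is something the paper simply asserts (``we can use the above argument repeatedly'') without spelling out.
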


\begin{proof}
We first assume that $\nu = m-n-1$. Then $Y$ is obtained from $UXU^*$ by removing one row and
column and we can apply Proposition \ref{varBaryshnikov} and in particular its reformulation
in \eqref{y-density5}. Averaging \eqref{y-density5} over the polynomial ensemble \eqref{f-pol-ensemble}
we obtain the joint density
\[ \frac{1}{\tilde{Z}_{n}} \Delta_n(y) \int_{[0,\infty)^n} 
	\prod_{k=1}^n \frac{y_k^{m-n-1}}{x_k^{m-n}}  \det \left[ f_k(x_j) \right]_{j,k=1}^n \, 
		\det \left[ \chi_{0 < y_j < x_k} \right]_{j,k=1}^n \, dx_1 \cdots dx_n, \]
		for a certain constant $\tilde{Z}_n$ (which also depends on $m$).
By the Andreief identity \eqref{Andreief-identity} this leads to
\[ \frac{1}{\tilde{Z}_n} \Delta_n(y) \det \left[ \int_0^{\infty} \frac{y_j^{m-n-1}}{x^{m-n}} f_k(x) \chi_{0 < y_j < x} \, dx \right] \]
with a new constant $\tilde{Z}_n$.
This is a polynomial ensemble \eqref{g-pol-ensemble5} with functions
\begin{align} \nonumber 
	g_k(y) & = \int_0^{\infty}  \frac{y^{m-n-1}}{x^{m-n}} f_k(x) \chi_{0 < y < x} \, dx \\
	 & = \int_y^{\infty} \frac{y^{m-n-1}}{x^{m-n}} f_k(x) \, dx, \qquad y > 0. \label{gk-definitions6}
	\end{align}
The substitution $x \mapsto \frac{y}{x}$ in \eqref{gk-definitions6} leads to
the expression \eqref{gk-definition5} with $\nu = m-n-1$.
This is the Mellin convolution of $f_k$ with the function $\chi_{m-n-1}$ where we define
\[ \chi_k : [0, \infty) \to \mathbb R : \, x \mapsto x^{k} \chi_{0 < x < 1}. \]
Thus $g_k = \chi_{m-n-1} \ast f_k$, if $\nu = m-n-1$,
where $\ast$ is used here for the Mellin convolution
\[ (f \ast g)(y) = \int_0^{\infty} f(x) g\left(\frac{y}{x}\right) \frac{dx}{x}. \]

For general $\nu \leq m-n-1$ we can use the above argument repeatedly, and we
find a polynomial ensemble \eqref{g-pol-ensemble5} with functions $g_k$ 
that are iterated Mellin convolutions of the functions $f_k$, namely
\[ g_k = \chi_{\nu} \ast \chi_{\nu+1} \ast \cdots \ast \chi_{m-n-1} \ast f_k, \qquad k =1, \ldots, n. \]
It is easy to calculate that
\[ \left(\chi_{\nu} \ast \chi_{\nu+1} \ast \cdots \ast \chi_{m-n-1} \right)(x) = x^{\nu} (1-x)^{m-n-\nu-1},
	\qquad 0 < x < 1, \]
and thus we obtain the formula \eqref{gk-definition5} for the functions $g_k$.
\end{proof}

An attentive reader may have noticed that the formula for $g_k$
in \eqref{gk-definition5} coincides with the one appearing in  \eqref{gk-definition2} 
in Theorem \ref{TruncationTransformation}.
This is no coincidence since we can use Theorem \ref{PosDefRestriction} 
to give an alternative proof of Theorem \ref{TruncationTransformation}.

\begin{proof}[Proof of Theorem \ref{TruncationTransformation}]

Let $X$ be an $l \times n$ matrix, and put
\[ \tilde{X} = \begin{pmatrix} XX^* & 0 \\ 0 & 0 \end{pmatrix} \]
which is an $m \times m$ matrix with $m-l$ rows and columns containing only zeros.
It is clear that the squared singular values of $X$ are equal to non-zero eigenvalues of $\tilde{X}$.

Also if $U$ is a unitary matrix of size $m \times m$ and $T$ is its left upper block of size $(n+\nu) \times l$
then 
\[ U \tilde{X} U^* =  \begin{pmatrix} T X X^* T^* & * \\ * & * \end{pmatrix} \]
where $*$ denotes a certain unspecified entry, whose value is not important for us. In other words,
$TX X^* T^*$ is equal to the principal submatrix of $U \tilde{X} U^*$ of size $(n + \nu) \times (n +\nu)$.
Assuming $U$ is Haar distributed over the unitary group, and the squared singular
values of $X$ are a polynomial ensemble \eqref{f-pol-ensemble}, independent of $U$, we then find
from Theorem \ref{PosDefRestriction} that the non-zero eigenvalues of $TXX^* T^*$ are 
a polynomial ensemble \eqref{g-pol-ensemble} with functions \eqref{gk-definition5}.
The non-zero eigenvalues of $TXX^* T^*$ are the same as the squared singular
values of $TX$ and Theorem \ref{TruncationTransformation} follows.
\end{proof}

\section{Rank one modification} \label{sec:rankone}

Let $X$ be a Hermitian $n \times n$ matrix with eigenvalues $x_1 < x_2 < \cdots < x_n$.
We take $Y = X + v v^*$ where $v$ is a vector of length $n$. Then the eigenvalues $y_j$
of $Y$ interlace with those of $X$, as follows from the Courant-Fischer Theorem, see 
e.g.~\cite[chapter 7.5]{Meyer}. 
We let $v  = (v_1, \ldots, v_n)^t$ be a vector of independent complex random variables whose
real and imaginary parts are independent and have a $N(0,1/2)$ distribution.
Then the distribution of the eigenvalues of $Y$ is given in \cite[Appendix E]{FN}
as
\begin{equation} \label{y-density6} 
	\prod_{j=1}^n e^{-(y_j-x_j)} \, \frac{\Delta_n(y)}{\Delta_n(x)} 
	\end{equation}
on the subset of $\mathbb R^n$ given by the interlacing conditions
\begin{equation} \label{xy-inequalities2} 
	x_1 < y_1 < x_2 < \cdots < x_n < y_n.
	\end{equation}
The following result is an immediate consequence.

\begin{proposition} \label{SmallRankModification}
Let $\Re v_j$, $\Im v_j$, for $j=1, \ldots, n$ be mutually independent 
normal random variables with mean zero and variance $1/2$. Let $X$ be a random 
Hermitian matrix of size $n \times n$, independent of $v = (v_1, \ldots, v_n)^t$, 
whose eigenvalues are a polynomial ensemble \eqref{f-pol-ensemble}
with certain functions $f_1, \ldots, f_n$.
Then the eigenvalues $y_1, \ldots, y_{n}$ of $Y = X + v v^*$ are a polynomial ensemble
\begin{equation} \label{g-pol-ensemble7} 
	\frac{1}{\tilde{Z}_{n}} \, \Delta_{n}(y) \, \det \left[ g_{k}(y_j) \right]_{j,k=1}^{n}, 
	\end{equation}
where
\begin{equation}  \label{gk-smallrank} 
	g_k(y) = \int_0^{\infty} e^{-x} f_k(y-x) \, dx, \qquad k=1, \ldots, n.
		\end{equation}
\end{proposition}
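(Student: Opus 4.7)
The plan is to adapt the proof of Theorem~\ref{SubmatrixRestriction}. Here the deterministic input is the conditional eigenvalue density \eqref{y-density6}, which plays the role that \eqref{y-density} did there. The strategy is the same: encode \eqref{y-density6} as a single determinantal expression that is jointly symmetric in $x$ and $y$, multiply by the polynomial ensemble density of $x$, and invoke Andreief's identity \eqref{Andreief-identity} to collapse the $x$-integration into an $n\times n$ determinant.

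First I would rewrite \eqref{y-density6} on all of $\mathbb R^n\times\mathbb R^n$ (treating both $x$ and $y$ as unordered) as
\[ p(y\mid x) = \frac{1}{n!}\,\frac{\Delta_n(y)}{\Delta_n(x)}\,\det\!\left[e^{-(y_j-x_k)}\chi_{x_k<y_j}\right]_{j,k=1}^n. \]
Factoring $e^{-y_j}$ out of row $j$ and $e^{x_k}$ out of column $k$ reduces the inner determinant to $\bigl(\prod_j e^{-y_j}\bigr)\bigl(\prod_k e^{x_k}\bigr)\det[\chi_{x_k<y_j}]_{j,k=1}^n$. On the sorted chamber $x_1<\cdots<x_n$, $y_1<\cdots<y_n$, the remaining determinant of indicators equals $1$ exactly when the interlacing $x_1<y_1<x_2<\cdots<x_n<y_n$ holds and $0$ otherwise, so the displayed expression reproduces \eqref{y-density6} there. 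Antisymmetry of both determinantal factors and of each Vandermonde ensures the formula is the correct symmetric extension to all of $\mathbb R^n\times\mathbb R^n$.

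Then I would multiply by the polynomial ensemble density $\frac{1}{Z_n}\Delta_n(x)\det[f_k(x_j)]$ and integrate over $x\in\mathbb R^n$. The two $\Delta_n(x)$'s cancel, and Andreief's identity \eqref{Andreief-identity} turns the remaining double integral into a single $n\times n$ determinant, producing
\[ p(y) = \frac{1}{Z_n}\,\Delta_n(y)\,\det\!\left[\int_{-\infty}^{y_j} e^{x-y_j} f_k(x)\,dx\right]_{j,k=1}^n. \]
The substitution $u=y_j-x$ identifies the $(j,k)$-entry with $g_k(y_j) = \int_0^\infty e^{-u}f_k(y_j-u)\,du$ from \eqref{gk-smallrank}, which is the desired polynomial ensemble \eqref{g-pol-ensemble7}.

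The main point to verify is the first step, namely that $\det[\chi_{x_k<y_j}]_{j,k=1}^n$ really captures the interlacing $x_1<y_1<\cdots<x_n<y_n$ on the sorted chamber. This is a short combinatorial check: on the interlacing configuration the matrix is lower triangular with ones on the diagonal, while as soon as some $x_j\geq y_j$ occurs (take the smallest such $j$) rows $j-1$ and $j$ become identical (or row $1$ is identically zero if $j=1$), forcing the determinant to vanish. Once this identity is in hand the remainder is a formal application of Andreief's identity and a change of variables, paralleling Theorem~\ref{SubmatrixRestriction}.
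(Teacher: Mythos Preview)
Your proposal is correct and follows essentially the same route as the paper: encode the conditional density \eqref{y-density6} via the interlacing determinant $\det[\chi_{x_k<y_j}]$, cancel the Vandermonde in $x$, apply Andreief's identity, and change variables. The only cosmetic difference is that you absorb the exponential factors into the determinant before invoking Andreief (and spell out the verification of the interlacing-determinant identity), whereas the paper keeps $\prod_j e^{-(y_j-x_j)}$ outside as in \eqref{y-density7}; the computations are otherwise identical.
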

Thus $g_k$ is the convolution of $f_k$ with $x \mapsto e^{-x} \chi_{x \geq 0}$.
\begin{proof}
The interlacing \eqref{xy-inequalities2} is encoded by a determinant, and it follows that \eqref{y-density6} is 
a polynomial ensemble 
\begin{equation} \label{y-density7} 
\frac{1}{n!} \prod_{j=1}^n e^{-(y_j-x_j)} \frac{\Delta_n(y)}{\Delta_n(x)} \det \left[ \chi_{x_k < y_j} \right]_{j,k=1}^n,
\end{equation}
where now we disregard the ordering of the $y_j$'s and  consider \eqref{y-density7} as a probability density on $\mathbb R^n$. 

We average over $x_1, \ldots, x_n$ distributed as in \eqref{f-pol-ensemble}. 
By Andreief's identity \eqref{Andreief-identity}, we obtain for the density of the eigenvalues of $Y$
\[ \frac{1}{\tilde{Z}_n}  \Delta_n(y) \det \left[ \int_{-\infty}^{y_j} e^{-(y_j-x)} f_k(x) \, dx \right]_{j,k=1}^n. \]
Changing variables  $x \mapsto y_j-x$ in the integral in the determinant, we arrive at
\eqref{g-pol-ensemble7} with functions \eqref{gk-smallrank}.
\end{proof}

Here is a variation on the same theme.

\begin{proposition} \label{SmallRankModification2} 
Let $n, \nu \geq 1$. Let $\Re v_j$, $\Im v_j$, for $j=1, \ldots, n+\nu$ be mutually independent 
normal random variables with mean zero and variance $1/2$. Let $X$ be a random 
$(n+\nu) \times (n + \nu)$ positive semidefinite Hermitian matrix, independent of $v_1, \ldots, v_n$, 
with exactly $n$ positive eigenvalues $x_1, \ldots, x_n$ that are a polynomial ensemble \eqref{f-pol-ensemble}
with certain functions $f_1, \ldots, f_n$ on $[0,\infty)$.
Then, almost surely, $Y = X + vv^*$ has an eigenvalue zero of multiplicity $\nu-1$ and $n+1$
positive eigenvalues $y_1, \ldots, y_{n+1}$ that are a polynomial ensemble
\begin{equation} \label{g-pol-ensemble8} 
	\frac{1}{\tilde{Z}_{n+1}} \, \Delta_{n+1}(y) \, \det \left[ g_{k}(y_j) \right]_{j,k=1}^{n+1}, 
		\qquad \text{all } y_j > 0,
	\end{equation}
with functions
\begin{equation}  \label{gk-smallrank2} 
\begin{aligned}
	g_{1}(y) & = y^{\nu-1} e^{-y}, \\
	g_{k+1}(y) & = y^{\nu-1} e^{-y} \int_{c}^y x^{-\nu} e^{x} f_{k}(x) dx, \qquad k=1, \ldots, n, 
		\end{aligned}
		\end{equation}
for $y > 0$, where $c \in (0, \infty)$ is an arbitrary but fixed positive real number. 
\end{proposition}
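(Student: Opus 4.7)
My plan mirrors the approach in Theorem~\ref{PosDefRestriction}: first compute the conditional density of the $n+1$ positive eigenvalues of $Y = X + vv^*$ given the $n$ positive eigenvalues of $X$ by an approximation-and-limit argument, and then average over the polynomial ensemble of $X$ using a determinantal encoding of the interlacing together with Andreief's identity \eqref{Andreief-identity}.

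For the conditional density, I would approximate $X$ by a Hermitian matrix $X_\epsilon$ in which the zero eigenvalue of multiplicity $\nu$ is replaced by $\nu$ distinct positive eigenvalues $\epsilon a_1 < \cdots < \epsilon a_\nu$ for fixed $0 < a_1 < \cdots < a_\nu$, so that $X_\epsilon$ has simple spectrum and formula \eqref{y-density6} applies to $Y_\epsilon = X_\epsilon + vv^*$. The interlacing \eqref{xy-inequalities2} forces $\nu - 1$ eigenvalues of $Y_\epsilon$ into the intervals $(\epsilon a_j, \epsilon a_{j+1})$, and these vanish as $\epsilon \to 0$; the remaining $n+1$ eigenvalues $y_1 < \cdots < y_{n+1}$ interlace with $x_1, \ldots, x_n$ on $(0, \infty)$. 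After the rescaling $b_j = \epsilon c_j$ of the vanishing eigenvalues, the factor $\epsilon^{-(\nu-1)}$ arising in $\Delta_{n+\nu}(\tilde y)/\Delta_{n+\nu}(\tilde x)$ cancels exactly against $\epsilon^{\nu-1}$ from the Lebesgue measure of integration, and the identity \eqref{ab-integral} contributes $1/(\nu-1)!$. The resulting limit is
\[
\rho(y \mid x) = \frac{1}{(\nu-1)!} \prod_{k=1}^{n+1} y_k^{\nu-1} e^{-y_k} \prod_{k=1}^{n} x_k^{-\nu} e^{x_k} \frac{\Delta_{n+1}(y)}{\Delta_n(x)},
\]
supported on the ordered region $0 < y_1 < x_1 < y_2 < \cdots < x_n < y_{n+1}$.

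Next, I would encode the interlacing on unordered variables by the signed $(n+1) \times (n+1)$ determinant $\det M(y, x)$ whose first column is identically $1$ and whose columns $2, \ldots, n+1$ have entries $\chi_{x_k < y_j}$; on the ordered region this determinant equals $1$ (the matrix is triangular with unit diagonal), and $\det M \cdot \Delta_{n+1}(y)/\Delta_n(x)$ is symmetric in both $y$ and $x$. Multiplying by the polynomial ensemble $\frac{1}{Z_n} \Delta_n(x) \det[f_k(x_j)]$ cancels $\Delta_n(x)$, and Laplace-expanding $\det M$ along the all-ones column reduces the $x$-integral to an alternating sum of $n \times n$ Andreief integrals against the measure $x^{-\nu} e^x \, dx$. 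Each such integral yields a minor with entries $\int_0^{y_i} x^{-\nu} e^x f_k(x) \, dx$, and the alternating sum over the removed row $j$ is exactly the Laplace expansion, along the all-ones first column, of the $(n+1) \times (n+1)$ matrix with these entries. Pulling the factor $y_j^{\nu-1} e^{-y_j}$ into the $j$-th row of the resulting determinant then yields \eqref{g-pol-ensemble8}--\eqref{gk-smallrank2} with $c = 0$.

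Finally, to accommodate an arbitrary base point $c > 0$---which is essential if $x^{-\nu} e^x f_k(x)$ fails to be integrable at the origin---the identity $\int_c^y = \int_0^y - \int_0^c$ shows that replacing $\int_0^y$ by $\int_c^y$ amounts to subtracting from each of columns $2, \ldots, n+1$ a scalar multiple of the first column $g_1(y) = y^{\nu-1} e^{-y}$, a column operation that leaves the determinant and hence the polynomial ensemble unchanged. The main technical obstacle will be the $\epsilon \to 0$ limit in the first step: one must verify that the $\epsilon$-powers from the Vandermonde ratio cancel precisely against those from integrating over the $\nu - 1$ vanishing eigenvalues, so that the Selberg-type identity \eqref{ab-integral} delivers the finite value $1/(\nu-1)!$ independently of $a_1, \ldots, a_\nu$. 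This is the same cancellation that underlies the proof of Proposition~\ref{varBaryshnikov}.
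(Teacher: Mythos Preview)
Your proof is correct and follows essentially the same route as the paper: an approximation of $X$ by a matrix with simple spectrum combined with \eqref{y-density6} and the limit identity \eqref{ab-integral} yields the conditional density of the nonzero eigenvalues of $Y$, after which averaging over \eqref{f-pol-ensemble} via an interlacing determinant and Andreief's identity \eqref{Andreief-identity} produces the polynomial ensemble. The only minor difference is bookkeeping in the interlacing encoding---the paper uses the $n\times n$ determinant $\det[\chi_{y_j<x_k<y_{n+1}}]_{j,k=1}^n$, so that after Andreief every integral runs over the compact interval $(y_j,y_{n+1})\subset(0,\infty)$ and is automatically finite, whereas your $(n{+}1)\times(n{+}1)$ encoding lands first at $c=0$ and then shifts to $c>0$ by a column operation; when $x^{-\nu}e^x f_k(x)$ fails to be integrable at the origin this last step is only formal, so in that case you should carry a finite base point (or the paper's encoding) through the Andreief computation rather than repair it afterwards.
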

We may also take  $c=0$ or $c= \infty$ in \eqref{gk-smallrank2} provided that the 
integrals are all convergent.

\begin{proof}
We approximate $X$ by $A$ with distinct eigenvalues
$a_1 < \cdots < a_{\nu} < x_1 < \cdots x_n$ where the $a_j$ are close to $0$.
Then $B = A + v v^*$ has eigenvalues 
$b_1 < \cdots < b_{\nu-1} < y_1 < \cdots < y_{n+1}$ that interlace with those
of $A$, with a joint density, see \eqref{y-density6},
\[  \prod_{j=1}^{\nu-1} e^{-b_j} \prod_{j=1}^{n+1} e^{-y_j} \prod_{j=1}^{\nu} e^{a_j} \prod_{j=1}^n e^{x_j}
	\frac{\Delta_{\nu-1}(b)  \left(\prod_{j=1}^{\nu-1} \prod_{k=1}^{n+1} (y_k-b_j) \right)
	 \Delta_{n+1}(y)}{\Delta_{\nu}(a) \left(\prod_{j=1}^{\nu} \prod_{k=1}^n (x_k - a_j) \right) \Delta_{n}(x)}
	\]
subject to the interlacing conditions.
		
We restrict this to the $y$-variables by integrating out $b_1, \ldots, b_{\nu-1}$. This gives the
joint density for $y_1, \ldots, y_{n+1}$
\begin{multline*} 
	\prod_{j=1}^{n+1} e^{-y_j} \prod_{j=1}^n e^{x_j} \prod_{j=1}^{\nu} e^{a_j} 
	\frac{\Delta_{n+1}(y)}{\Delta_{\nu}(a) \left(\prod_{j=1}^{\nu} \prod_{k=1}^n (x_k - a_j) \right) \Delta_n(x)}
	\\
\int_{a_1}^{a_2} db_1 \cdots \int_{a_{\nu-1}}^{a_{\nu}} d b_{\nu-1} 
	\prod_{j=1}^{\nu-1} e^{-b_j} \Delta_{\nu-1}(b)   \left(\prod_{j=1}^{\nu-1} \prod_{k=1}^{n+1} (y_k-b_j) \right).
	 \end{multline*}
In the limit where all $a_j \to 0$ we also have that all $b_j \to 0$
because of the interlacing. Then $A \to X$,  $B \to Y$, and  using also \eqref{ab-integral} we find
the limiting joint density for the nonzero eigenvalues  $y_1, \ldots, y_{n+1}$ of $Y$
\[ \frac{1}{Z_{n+1}} \prod_{j=1}^{n+1} \left(y_j^{\nu-1} e^{-y_j} \right) \,
	\prod_{j=1}^n \left( x_j^{-\nu} e^{x_j} \right) \, \frac{\Delta_{n+1}(y)}{\Delta_n(x)}
	 \]
subject to the interlacing $0 < y_1 < x_1 < \cdots < x_n < y_{n+1}$.
The interlacing is encoded by the determinant $\det \left[ \chi_{y_j < x_k < y_{n+1}} \right]_{j,k=1}^n$.

Next, averaging over the polynomial ensemble \eqref{f-pol-ensemble} and using the
Andreief identity \eqref{Andreief-identity}, we find in a now 
familiar fashion a joint density
\begin{equation} \label{y-density8} 
	\frac{1}{\tilde{Z}_{n+1}} \Delta_{n+1}(y) \,
		\det\left[ y_j^{\nu-1} e^{-y_j} \int_0^{\infty} x^{-\nu} e^x f_k(x) \chi_{y_j < x < y_{n+1}} dx \right]_{j,k=1}^n
	\end{equation}
	for the eigenvalues of $Y$.
The $n \times n$ determinant in \eqref{y-density8} is extended to an $(n+1) \times (n+1)$ determinant
by adding first a row with zeros and then a column with ones. Then after  elementary
column operations we easily arrive at the polynomial ensemble  \eqref{g-pol-ensemble8} with functions \eqref{gk-smallrank2}
and a possibly different constant $\tilde{Z}_{n+1}$.
\end{proof} 

The two Propositions \ref{SmallRankModification} and \ref{SmallRankModification2}
have the following consequences regarding squared singular values of an extension
of a matrix by one row or one column.

\begin{corollary} \label{Laguerrecor1} Suppose $\nu \geq 0$.
Let $X$ be an $(n + \nu) \times n$ random matrix with squared singular values $0 < x_1 < \cdots < x_n$
that form a polynomial ensemble \eqref{f-pol-ensemble} with certain functions $f_1, \ldots, f_n$ on $[0,\infty)$.
Let $Y = \begin{pmatrix} X \\ v^* \end{pmatrix}$ with $v$ a random vector of independent
complex Gaussians as in Proposition \ref{SmallRankModification},
which is independent of $X$.
Then the squared singular values $y_1, \ldots, y_n$ of $Y$ are a polynomial ensemble
\eqref{g-pol-ensemble7} with functions 
\begin{equation} \label{gk-smallrank3} 
	g_k(y) = \int_0^y e^{-x} f_k(y-x) \, dx.
	\end{equation}
\end{corollary}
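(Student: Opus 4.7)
The plan is to reduce this corollary directly to Proposition \ref{SmallRankModification} by passing from the rectangular matrix $X$ to its Gram matrix $X^*X$. Observe that with $Y = \begin{pmatrix} X \\ v^* \end{pmatrix}$, a straightforward block computation gives
\[ Y^*Y = \begin{pmatrix} X^* & v \end{pmatrix} \begin{pmatrix} X \\ v^* \end{pmatrix} = X^*X + vv^*, \]
so the squared singular values of $Y$ are exactly the eigenvalues of the rank-one Hermitian perturbation $X^*X + vv^*$ of $X^*X$.

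Next, I would note that $X^*X$ is an $n \times n$ positive semidefinite Hermitian matrix whose eigenvalues are, by hypothesis, the polynomial ensemble \eqref{f-pol-ensemble} with the given functions $f_1, \ldots, f_n$. The vector $v$ is independent of $X$ and has the joint distribution of independent complex Gaussians required by Proposition \ref{SmallRankModification}. Thus the hypotheses of that proposition are met with $X^*X$ playing the role of the Hermitian matrix, and I would apply it directly to conclude that the eigenvalues $y_1, \ldots, y_n$ of $Y^*Y$ form a polynomial ensemble \eqref{g-pol-ensemble7} with
\[ g_k(y) = \int_0^{\infty} e^{-x} f_k(y-x)\, dx. \]

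Finally, since the functions $f_k$ are defined on $[0,\infty)$ (or equivalently, vanish on $(-\infty,0)$), the integrand is zero when $y-x < 0$, i.e.\ when $x > y$. Truncating the range of integration to $(0,y)$ yields precisely the formula \eqref{gk-smallrank3}. Note that $X^*X + vv^*$ is automatically positive semidefinite, which is consistent with the ensemble being supported on $y_j > 0$.

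There is no serious obstacle here: the entire argument consists in recognizing the structural identity $Y^*Y = X^*X + vv^*$ and then invoking Proposition \ref{SmallRankModification}. The only minor point requiring care is the domain of the $f_k$, which allows the lower convolution bound to be moved from $-\infty$ to $0$ and the upper bound from $\infty$ to $y$.
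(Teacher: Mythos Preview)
Your proof is correct and follows essentially the same approach as the paper: recognize that $Y^*Y = X^*X + vv^*$, apply Proposition~\ref{SmallRankModification} to the $n\times n$ Hermitian matrix $X^*X$, and then truncate the convolution integral using the fact that $f_k$ is supported on $[0,\infty)$. The paper's own argument is essentially a two-line version of what you wrote.
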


\begin{proof}
The squared singular values of $X$ are the eigenvalues of $X^*X$.
The squared singular values of $Y$ are the eigenvalues of 
$\begin{pmatrix} X^* & v \end{pmatrix} \begin{pmatrix} X \\ v^* \end{pmatrix} = X^* X + v v^*$. 
Thus the result follows from Proposition \ref{SmallRankModification}. 
The integration in \eqref{gk-smallrank3} extends to $y$ only, 
and not to $\infty$ as in \eqref{gk-smallrank}, since $f_k(x)$ is defined for $x \geq 0$ only,
and  we consider $f_k(y-x)$ to be zero if $x > y$.

\end{proof}

\begin{corollary} \label{Laguerrecor2} Suppose $\nu \geq 1$.
Let $X$ be an $(n + \nu) \times n$ matrix with squared singular values $0 < x_1 < \cdots < x_n$
that are a polynomial ensemble \eqref{f-pol-ensemble} with certain functions $f_1, \ldots, f_n$ on $[0,\infty)$.
Let $Y = \begin{pmatrix} X & v \end{pmatrix}$ with $v$ a random vector of independent complex Gaussians as in Proposition \ref{SmallRankModification2}, which is independent of $X$.
Then the squared singular values $y_1, \ldots, y_{n+1}$ of $Y$ are a polynomial ensemble
\eqref{g-pol-ensemble8} with functions \eqref{gk-smallrank3}.
\end{corollary}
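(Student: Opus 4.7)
The plan is to reduce Corollary \ref{Laguerrecor2} to Proposition \ref{SmallRankModification2} by passing from singular values of $Y$ to eigenvalues of $YY^*$. Since $Y$ has size $(n+\nu) \times (n+1)$ with $n+\nu \geq n+1$, its squared singular values coincide with the positive eigenvalues of the $(n+\nu) \times (n+\nu)$ positive semidefinite Hermitian matrix $YY^*$.

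The key identity is
\[
YY^* = \begin{pmatrix} X & v \end{pmatrix} \begin{pmatrix} X^* \\ v^* \end{pmatrix} = XX^* + vv^*.
\]
Set $\tilde{X} := XX^*$. This is an $(n+\nu) \times (n+\nu)$ positive semidefinite matrix whose $n$ nonzero eigenvalues are exactly the squared singular values $x_1, \ldots, x_n$ of $X$, with the remaining $\nu$ eigenvalues equal to zero. By hypothesis these $n$ nonzero eigenvalues form the polynomial ensemble \eqref{f-pol-ensemble} with functions $f_1, \ldots, f_n$ on $[0,\infty)$, so $\tilde{X}$ satisfies exactly the structural assumptions imposed on the underlying matrix in Proposition \ref{SmallRankModification2}.

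Since $v$ is independent of $X$ (and hence of $\tilde{X}$) and is a vector of $n+\nu$ independent complex Gaussians with the variance specified there, the hypotheses of Proposition \ref{SmallRankModification2} are met. Applying that proposition to $\tilde{X} + vv^* = YY^*$ gives, with probability one, an eigenvalue zero of multiplicity $\nu - 1$ together with $n+1$ positive eigenvalues forming the polynomial ensemble \eqref{g-pol-ensemble8}, with the functions as in \eqref{gk-smallrank2}. Identifying these $n+1$ positive eigenvalues of $YY^*$ with the squared singular values $y_1, \ldots, y_{n+1}$ of $Y$ yields the corollary.

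There is essentially no obstacle: the entire content is the identity $YY^* = XX^* + vv^*$ together with the observation that $XX^*$ has precisely the spectral structure demanded by Proposition \ref{SmallRankModification2}. The one point that requires care is to use $YY^*$ rather than $Y^*Y$, since only the former realizes the desired rank-one Gaussian perturbation on top of an $(n+\nu) \times (n+\nu)$ positive semidefinite matrix whose nonzero eigenvalues form the given polynomial ensemble; the matrix $Y^*Y$ would instead be an $(n+1) \times (n+1)$ block matrix that does not fit the template of Section~\ref{sec:rankone} as cleanly.
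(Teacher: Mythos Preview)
Your argument is correct and is precisely the paper's own proof: both reduce to Proposition~\ref{SmallRankModification2} via the identity $YY^* = XX^* + vv^*$, noting that $XX^*$ is $(n+\nu)\times(n+\nu)$ positive semidefinite with the required spectral structure. You even correctly land on the functions \eqref{gk-smallrank2}; the reference to \eqref{gk-smallrank3} in the statement appears to be a typographical slip in the paper, since Proposition~\ref{SmallRankModification2} yields \eqref{gk-smallrank2} and the $(n+1)$-function ensemble \eqref{g-pol-ensemble8} cannot be described by the $n$ functions of \eqref{gk-smallrank3}.
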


\begin{proof}
The squared singular values of $X$ are the non-zero eigenvalues of $XX^*$, and
the squared singular values of $Y$ are the non-zero eigenvalues of $Y Y^*= X X^* + v v^*$.
Thus the result follows from Proposition \ref{SmallRankModification2}.
\end{proof}

It is interesting to note that a combination of Corollaries \ref{Laguerrecor1} and \ref{Laguerrecor2}
leads to the proof of one of the classical results of random matrix theory \cite{AGZ,For},
namely that the squared singular values of a complex Ginibre matrix are distributed
as a Laguerre ensemble. See also \cite[Chapter 4.3.3]{For} for a similar
approach, and \cite{For2} for related results.

\begin{corollary} \label{Laguerrecor3} 
Suppose $X$ is an $m \times n$ random matrix such that
$\Re X_{i,j}$, $\Im X_{i,j}$, $i= 1, \ldots, m$, $j=1, \ldots,n$ are independent
normal random variables with mean zero and variance $1/2$. Suppose $\nu = m-n \geq 0$. Then the
squared singular values $x_1, \ldots, x_n$ of $X$ have the joint density
\begin{equation} \label{LaguerreUE} 
	\frac{1}{Z_n} \, \prod_{i < j} (x_j-x_i)^2 \, \prod_{j=1}^n  x_j^{\nu} e^{-x_j}, \qquad \text{all } x_j > 0. 
	\end{equation}
\end{corollary}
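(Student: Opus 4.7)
The plan is induction on $(m, n)$, building an arbitrary $m \times n$ Ginibre matrix from the $1 \times 1$ case by alternately appending independent complex Gaussian rows and columns. Since the entries of a Ginibre matrix are i.i.d.\ complex Gaussians, appending an independent row or column of the same type produces again a Ginibre matrix, so Corollaries \ref{Laguerrecor1} and \ref{Laguerrecor2} apply at each step. The base case $m=n=1$ is immediate: $X$ reduces to a single standard complex Gaussian, and $|X|^2$ has the exponential density $e^{-x}$ on $(0,\infty)$, which is exactly \eqref{LaguerreUE} with $n=1$, $\nu=0$.

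For the inductive step I would assume that the squared singular values of an $(n+\nu)\times n$ Ginibre matrix form a Laguerre ensemble, so that (up to rescaling within the span) one may take $f_k(x)=x^{k+\nu-1}e^{-x}$ for $k=1,\ldots,n$. Appending a row of Gaussians and invoking Corollary \ref{Laguerrecor1} gives new weight functions
\[
g_k(y)=\int_0^y e^{-x}(y-x)^{k+\nu-1}e^{-(y-x)}\,dx=\frac{y^{k+\nu}}{k+\nu}\,e^{-y},
\]
whose span is $\spn\{y^{\nu+1}e^{-y},\ldots,y^{\nu+n}e^{-y}\}$, the Laguerre-$(\nu+1)$ basis for $n$ particles; since polynomial ensembles depend only on the span, the row extension is Laguerre with parameter $\nu+1$. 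Similarly, when $\nu\ge 1$, appending a column and invoking Corollary \ref{Laguerrecor2} with $c=0$ yields $g_1(y)=y^{\nu-1}e^{-y}$ and, for $k=1,\ldots,n$,
\[
g_{k+1}(y)=y^{\nu-1}e^{-y}\int_0^y x^{-\nu}e^{x}\,x^{k+\nu-1}e^{-x}\,dx=\frac{y^{k+\nu-1}}{k}\,e^{-y},
\]
which span the Laguerre-$(\nu-1)$ basis $\{y^{\nu-1}e^{-y},\ldots,y^{\nu+n-1}e^{-y}\}$ at size $n+1$.

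To reach an arbitrary $(m,n)$ with $\nu=m-n\ge 0$, I would start at $(1,1)$ and apply the row step $m-1$ times to arrive at $(m,1)$ with Laguerre parameter $m-1$, then apply the column step $n-1$ times to arrive at $(m,n)$ with parameter $\nu$. The requirement $\nu\ge 1$ at each column step amounts to $m\ge n$, which is the standing hypothesis, so the induction carries through. The argument has no genuine obstacle: all the substantive work has been done in Corollaries \ref{Laguerrecor1} and \ref{Laguerrecor2}, and what remains here are the two elementary integrals above together with the trivial observation that appending independent Gaussian rows or columns to an independent Ginibre matrix produces a larger Ginibre matrix.
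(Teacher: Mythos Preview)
Your proof is correct and follows essentially the same inductive scheme as the paper: verify the $1\times 1$ base case, then use Corollary~\ref{Laguerrecor1} for row extensions and Corollary~\ref{Laguerrecor2} (via Proposition~\ref{SmallRankModification2} with $c=0$) for column extensions. The paper merely states that ``the calculations are straightforward and we do not give them explicitly here,'' whereas you have carried out precisely those two integrals and checked the $\nu\ge 1$ condition along the chosen path $(1,1)\to(m,1)\to(m,n)$.
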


\begin{proof} We use induction.
It is easy to check Corollary \ref{Laguerrecor3} for $m=n=1$.

Assume Corollary \ref{Laguerrecor3} holds for certain $m, n \geq 1$.
Note that \eqref{LaguerreUE} is a polynomial ensemble with functions $f_k(x) = x^{\nu+k-1} e^{-x}$ for $k=1, \ldots, n$.
Then by Corollary \ref{Laguerrecor1} it will follow that Corollary \ref{Laguerrecor3} also holds for $m+1$ and $n$,
and by Corollary \ref{Laguerrecor2} it holds for $m$ and $n+1$, provided that $m > n$. 
The calculations are straightforward and we do not give them explictly here.
\end{proof}

\section{Matrix extensions} \label{sec:extension}

In this final section we start from an $n \times n$ Hermitian matrix $X$ and we are going to extend
it to an $(n+1) \times (n+1)$ matrix by adding one row and one column. We write
\begin{equation} \label{Y-extension} 
	Y = \begin{pmatrix} X & v \\ v^* & c \end{pmatrix} 
	\end{equation}
where $v = \begin{pmatrix} v_1 & v_2 & \cdots & v_n \end{pmatrix}^t$ is a complex column vector, and $c \in \mathbb R$ is real.

The following result was given by Forrester \cite{For2}
and Adler, Van Moerbeke and Wang \cite{AvMW},
see also  \cite[Chapter 4.3.2]{For} and  \cite[section 3.1]{FN}, 
where the focus is on the situation where
$X$ is an $n \times n$ GUE matrix.

\begin{theorem} \label{HermitianExtensionfixed}
Suppose $c$, $\Re v_j$, and $\Im v_j$ for $j=1, \ldots, n$ are  independent 
normal random variables with mean zero, where $c$ has variance $1$ and $\Re v_j$, $\Im v_j$ 
have variance $1/2$. Assume $X$ has simple eigenvalues 
$x_1 < x_2 < \cdots < x_n$.
Then with probability one, the ordered eigenvalues $y_1 \leq y_2 \leq \cdots \leq y_{n+1}$ of $Y$ are simple, and strictly interlace with those of $X$:
\begin{equation} \label{yx-interlacing}
	y_1 < x_1 < y_2 < x_2  < \cdots < y_n < x_n < y_{n+1}. \end{equation}
In addition, the eigenvalues of $Y$ have the probability density 
\begin{equation} \label{y-density9}
\frac{1}{\sqrt{2\pi}} \, 
	\prod_{j=1}^{n+1} e^{-\frac{1}{2} y_j^2}   \prod_{j=1}^n e^{\frac{1}{2} x_j^2} \, \frac{\Delta_{n+1}(y)}{\Delta_n(x)} 
	\end{equation}
on the subset of $\mathbb R^{n+1}$ defined by the interlacing condition \eqref{yx-interlacing}.
\end{theorem}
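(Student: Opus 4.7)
The plan is to reduce $X$ to diagonal form, identify the $n+1$ real parameters on which the spectrum of $Y$ truly depends, change variables to the eigenvalues $(y_1,\ldots,y_{n+1})$, and finally rewrite the Gaussian exponent in spectral terms. By unitary invariance of the law of $v$, conjugating $Y$ by $\diag(U^*,1)$ for $X = U D U^*$ reduces matters to $X = \diag(x_1,\ldots,x_n)$. Writing $v_j = r_j e^{i\phi_j}$, a further conjugation by $\diag(e^{-i\phi_1},\ldots,e^{-i\phi_n},1)$ shows that the spectrum of $Y$ depends only on $c$ and on $s_j := |v_j|^2$, not on the phases of the $v_j$. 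Passing to polar coordinates in each $v_j$ and integrating out $\phi_j$ turns the original Gaussian density into
\[
	\frac{1}{\sqrt{2\pi}}\, e^{-c^2/2} \prod_{j=1}^n e^{-s_j}\, dc\, ds_1 \cdots ds_n
\]
on $\mathbb R \times (0,\infty)^n$.

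Expanding the characteristic polynomial of the bordered diagonal matrix along its last row yields the algebraic identity
\[
	\prod_{k=1}^{n+1}(\lambda - y_k) = (\lambda - c) \prod_{j=1}^n (\lambda - x_j) - \sum_{j=1}^n s_j \prod_{l \neq j}(\lambda - x_l),
\]
equivalently the secular equation $\lambda - c = \sum_j s_j/(\lambda - x_j)$. Since $s_j > 0$ almost surely, the standard sign analysis of this rational function gives the strict interlacing \eqref{yx-interlacing} almost surely, and provides a smooth bijection between $(c,s_1,\ldots,s_n) \in \mathbb R \times (0,\infty)^n$ and the interlacing region for $(y_1,\ldots,y_{n+1})$. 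Comparing the coefficient of $\lambda^n$ in the identity above gives $c = \sum_k y_k - \sum_j x_j$, and substituting $\lambda = x_i$ gives $s_i = -\prod_k(x_i-y_k)/\prod_{l \neq i}(x_i - x_l)$.

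The main obstacle is the evaluation of the Jacobian $J = \partial(c,s_1,\ldots,s_n)/\partial(y_1,\ldots,y_{n+1})$, whose top row is identically $1$ and whose $(i,k)$-entry for $i \geq 1$ is $\prod_{l \neq k}(x_i-y_l)/\prod_{l \neq i}(x_i-x_l)$. I would pull out the $i$-dependent denominators from each row, producing the scalar factor $(-1)^{\binom{n}{2}}\Delta_n(x)^{-2}$, and then interpret column $k$ of the remainder as the image of the polynomial $T_k(\lambda) := \prod_{l \neq k}(\lambda-y_l)$ of degree $n$ under the $n+1$ linear functionals $L_0(p) = [\lambda^n] p$ and $L_i(p) = p(x_i)$, $i=1,\ldots,n$. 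Writing $T_k = \sum_j a_{jk}\lambda^j$ factors the resulting determinant as $\det[L_i(\lambda^j)]_{i,j=0}^n \cdot \det[a_{jk}]$. The first factor, by expansion along the top row, equals $(-1)^n \Delta_n(x)$. For the second, observe that $[a_{jk}]$ is the inverse of the Lagrange interpolation matrix $[y_k^j/T_k(y_k)]_{k,j}$; combined with $\det[y_k^j] = \Delta_{n+1}(y)$ and $\prod_k T_k(y_k) = (-1)^{\binom{n+1}{2}}\Delta_{n+1}(y)^2$, this yields $\det[a_{jk}] = (-1)^{\binom{n+1}{2}}\Delta_{n+1}(y)$. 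All powers of $-1$ combine to $(-1)^{n(n+1)} = 1$, leaving $\det J = \Delta_{n+1}(y)/\Delta_n(x)$, which is positive on the interlacing region.

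To conclude, the trace identity $\Tr Y^2 = \Tr X^2 + 2\sum_j s_j + c^2$ gives $\frac{1}{2}c^2 + \sum_j s_j = \frac{1}{2}(\sum_k y_k^2 - \sum_j x_j^2)$, so multiplying the $(c,s)$-density by $|\det J|$ and substituting produces exactly the density \eqref{y-density9} on the interlacing region.
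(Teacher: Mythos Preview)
Your argument is correct and complete. The paper does not actually prove this theorem: it simply cites Lemma~1 of \cite{AvMW} and Proposition~6 of \cite{For2}. What you have written is precisely the standard direct computation those references carry out --- reduce to diagonal $X$ by unitary invariance, pass to $(c,s_1,\ldots,s_n)$ via polar coordinates, invert the secular equation explicitly, compute the Jacobian, and use the trace identity to rewrite the exponent --- so your proof is essentially the argument the paper defers to, spelled out in full. Your Jacobian evaluation via the factorisation through the Lagrange basis is clean and the sign bookkeeping $\binom{n}{2}+n+\binom{n+1}{2}=n(n+1)$ is correct.
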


\begin{proof}
See Lemma 1 in \cite{AvMW} or Proposition 6 in \cite{For2}.
\end{proof}

As before, there is an immediate consequence of Theorem
\ref{HermitianExtensionfixed} to polynomial ensembles.

\begin{proposition} \label{HermitianExtension}
Suppose $c$, $\Re v_j$, and $\Im v_j$ for $j=1, \ldots, n$ are mutually independent 
normal random variables with mean zero, where $c$ has variance $1$ and $\Re v_j$, $\Im v_j$ 
have variance $1/2$.   
Suppose that $X$ is a random Hermitian matrix of size $n \times n$, independent of $c$ and $v$, whose 
eigenvalues are a polynomial ensemble \eqref{f-pol-ensemble} with certain functions $f_1, \ldots, f_n$.
Then the eigenvalues $y_1, \ldots, y_{n+1}$ of $Y$ given by \eqref{Y-extension} are a polynomial ensemble
\begin{equation} \label{g-pol-ensemble6} 
	\frac{1}{\tilde{Z}_{n+1}} \, \Delta_{n+1}(y) \, \det \left[ g_{k}(y_j) \right]_{j,k=1}^{n+1}, 
	\end{equation}
with functions
\begin{equation}  \label{gk-extension} 
\begin{aligned} 	
	g_{1}(y) & = e^{-\frac{1}{2} y^2},  \\
	g_{k+1}(y) & = e^{-\frac{1}{2} y^2} \int_{0}^y e^{\frac{1}{2} x^2} f_k(x) \, dx, \qquad k=1, \ldots, n, \\
		\end{aligned}
		\end{equation}
\end{proposition}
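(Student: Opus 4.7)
The plan is to compute the joint density of the eigenvalues of $Y$ by conditioning on $X$, applying Theorem~\ref{HermitianExtensionfixed}, and averaging over the polynomial ensemble \eqref{f-pol-ensemble}. Because $\mathcal{P}(x)$ is a symmetric density on unordered $x$, the density of the ordered eigenvalues $x_1 < \cdots < x_n$ equals $n!\,\mathcal{P}(x)$. Multiplying by \eqref{y-density9} the factor $\Delta_n(x)$ cancels, and the density of the ordered eigenvalues $y_1 < \cdots < y_{n+1}$ of $Y$ becomes
\[ \frac{n!}{\sqrt{2\pi}\, Z_n}\, \prod_{j=1}^{n+1} e^{-y_j^2/2}\, \Delta_{n+1}(y) \int \prod_{j=1}^n e^{x_j^2/2}\, \det\big[f_k(x_j)\big]_{j,k=1}^n\, dx_1 \cdots dx_n, \]
with the integration taken over the ordered interlacing set. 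Under \eqref{yx-interlacing} this set factorises as $x_j \in (y_j, y_{j+1})$ for $j=1,\ldots,n$, so by multilinearity of the determinant the inner integral reduces to $\det\big[\int_{y_j}^{y_{j+1}} e^{x^2/2} f_k(x)\, dx\big]_{j,k=1}^n$.

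Next I would convert this $n \times n$ determinant into the $(n+1) \times (n+1)$ determinant demanded by \eqref{g-pol-ensemble6}. Setting $F_k(y) := \int_0^y e^{x^2/2} f_k(x)\, dx$, the entries read $F_k(y_{j+1}) - F_k(y_j)$. A short calculation (subtract consecutive rows in the auxiliary $(n+1) \times (n+1)$ matrix whose $j$-th row is $(1, F_1(y_j), \ldots, F_n(y_j))$, then expand along the first column) gives the algebraic identity
\[ \det\big[F_k(y_{j+1}) - F_k(y_j)\big]_{j,k=1}^n = \det\big[h_k(y_j)\big]_{j,k=1}^{n+1}, \]
where $h_1(y) = 1$ and $h_{k+1}(y) = F_k(y)$ for $k=1,\ldots,n$. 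Multiplying row $j$ of the right-hand determinant by $e^{-y_j^2/2}$ absorbs the factor $\prod_j e^{-y_j^2/2}$ and produces $\det[g_k(y_j)]_{j,k=1}^{n+1}$ with $g_1,\ldots,g_{n+1}$ exactly as in \eqref{gk-extension}.

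At this stage the ordered density takes the form $C\,\Delta_{n+1}(y)\, \det[g_k(y_j)]_{j,k=1}^{n+1}$ for an explicit constant $C$. Since both $\Delta_{n+1}(y)$ and $\det[g_k(y_j)]_{j,k=1}^{n+1}$ are antisymmetric in $y_1,\ldots,y_{n+1}$, their product is symmetric, so dividing by $(n+1)!$ and extending symmetrically from the ordered region to $\mathbb{R}^{n+1}$ yields the polynomial ensemble \eqref{g-pol-ensemble6} on the unordered eigenvalues, with normalising constant $\tilde{Z}_{n+1} = (n+1)\sqrt{2\pi}\,Z_n$.

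The main conceptual point (more than a true obstacle) is the dimensional bookkeeping: $X$ has $n$ eigenvalues while $Y$ has $n+1$, so the $n \times n$ determinant obtained from integrating out $x$ must be promoted to an $(n+1) \times (n+1)$ determinant. The row-reduction identity above performs exactly this promotion, and its extra column of ones becomes, after absorbing the Gaussian factor, the first function $g_1(y) = e^{-y^2/2}$---the natural counterpart of the additional scalar $c$ on the diagonal of \eqref{Y-extension}. Beyond Theorem~\ref{HermitianExtensionfixed} and this algebraic manipulation no new ingredient is required; the pattern closely parallels the proofs of Propositions~\ref{SmallRankModification} and~\ref{SmallRankModification2}.
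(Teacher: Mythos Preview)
Your proof is correct and follows the same overall strategy as the paper: condition on $X$, apply Theorem~\ref{HermitianExtensionfixed}, average over the polynomial ensemble, and then promote the resulting $n\times n$ determinant to size $(n+1)\times(n+1)$.

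The only difference is in the mechanics of the averaging and the promotion step. The paper encodes the interlacing via the indicator determinant $\det[\chi_{y_j\le x_k<y_{n+1}}]_{j,k=1}^n$ and applies the Andr\'eief identity~\eqref{Andreief-identity}, obtaining entries $\int_{y_j}^{y_{n+1}} e^{x^2/2}f_k(x)\,dx$ (common upper limit $y_{n+1}$); it then enlarges the determinant by appending an ad hoc extra row $\int_{y_{n+1}}^{0}e^{x^2/2}f_k(x)\,dx$ and a final column $(0,\ldots,0,1)^t$, followed by row operations. You instead exploit directly that the ordered interlacing region is the product $\prod_j(y_j,y_{j+1})$, integrate term by term to get entries $F_k(y_{j+1})-F_k(y_j)$, and use the clean row-reduction identity on the $(n+1)\times(n+1)$ matrix with rows $(1,F_1(y_j),\ldots,F_n(y_j))$. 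Your route avoids the explicit use of Andr\'eief and makes the appearance of the extra column of ones (hence of $g_1(y)=e^{-y^2/2}$) more transparent; the paper's route has the virtue of fitting the same template used throughout the article. Either way the argument is complete.
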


\begin{proof}
The eigenvalues of $X$ are distinct with probability one. We order them, say $x_1 < x_2 < \cdots < x_n$.

We use an interlacing determinant as in \eqref{interlacing-det} to write
the density \eqref{y-density9} as
\[ \frac{1}{Z_{n+1}}  \prod_{j=1}^{n+1} e^{-\frac{1}{2} y_j^2}  \prod_{j=1}^n e^{\frac{1}{2} x_j^2}
	\,\frac{\Delta_{n+1}(y)}{\Delta_n(x)} 
	\det \left[ \chi_{y_j \leq x_k} \right]_{j,k=1}^{n+1} \]
with $x_{n+1} = +\infty$ and a certain constant $Z_{n+1}$, which is also
\begin{equation} \label{y-density10}  
	\frac{1}{Z_{n+1}}
	\prod_{j=1}^{n+1} e^{-\frac{1}{2} y_j^2}  \prod_{j=1}^n e^{\frac{1}{2} x_j^2}
	\, \frac{\Delta_{n+1}(y)}{\Delta_n(x)} 
	\det \left[ \chi_{y_j \leq x_k < y_{n+1}} \right]_{j,k=1}^{n}. \end{equation}
Then averaging \eqref{y-density10} with respect to the polynomial ensemble \eqref{f-pol-ensemble}
and using the Andreief identity \eqref{Andreief-identity} we obtain for the density of $y_1, \ldots, y_{n+1}$,
\[ \frac{1}{\tilde{Z}_{n+1}} \prod_{j=1}^{n+1} e^{-\frac{1}{2} y_j^2} \Delta_{n+1}(y) 
	\det \left[ \int_{y_j}^{y_{n+1}} e^{\frac{1}{2} x^2} f_k(x) dx \right]_{j,k=1}^n. \]
We extend the last determinant to an $(n+1) \times (n+1)$ determinant by adding 
first a row with entries  $ \int_{y_{n+1}}^0  e^{\frac{1}{2}x^2} f_k(x) dx$ for $k=1, \ldots, n$ and
then a column $\begin{pmatrix} 0 & \cdots & 0 & 1 \end{pmatrix}^t$ of length $n+1$.
Then we subtract the last row from each of the other rows and change the sign in
each of the entries in rows $1$ up to $n$. Then the result is \eqref{g-pol-ensemble6}
with functions \eqref{gk-extension} and a possibly different constant $\tilde{Z}_{n+1}$.
\end{proof}

As a special case, we consider the polynomial ensemble \eqref{f-pol-ensemble} with
functions $f_k(x) = x^{k-1} e^{-\frac{1}{2} x^2}$ for $k=1, \ldots, n$. This is the same as
\begin{equation} \label{GUE-density} 
	\frac{1}{Z_n} \, \Delta_n(x)^2 \, \prod_{j=1}^n e^{-\frac{1}{2} x_j^2}. 
	\end{equation}
Then by \eqref{gk-extension} we get $g_1(y) = e^{-\frac{1}{2}y^2}$ and
\[ g_{k+1}(y) = e^{-\frac{1}{2}y^2} \int_0^y x^{k-1} dx =  \frac{1}{k} y^k e^{-\frac{1}{2} y^2}
	\qquad \text{for } k =1, \ldots, n. \]
The prefactor $\frac{1}{k}$ is immaterial and it follows from Proposition \ref{HermitianExtension}
that the density function for the  eigenvalues of $Y$ is
\[ \frac{1}{Z_{n+1}} \, \Delta_{n+1}(y)^2 \, \prod_{j=1}^{n+1} e^{-\frac{1}{2} y_j^2}. \]
It is well-known that \eqref{GUE-density} is the density of eigenvalues of GUE random matrix
\cite{AGZ,For} and we conclude, as already noted in \cite[Chapter 4.3.2]{For},
 that we can use Proposition \ref{HermitianExtension} to give an
inductive proof of this basic result of random matrix theory.

\subsubsection*{Acknowledgements}

The author thanks Peter Forrester for useful correspondence and 
for pointing out relevant references to the literature.

The author is supported by KU Leuven Research Grant OT/12/073, the Belgian 
Interuniversity Attraction Pole P07/18, and FWO Flanders projects G.0641.11 and
G.0934.13.


\begin{thebibliography}{99}
\bibitem{AvMW} M. Adler, P. van Moerbeke and D. Wang,
	Random matrix minor processes related to percolation theory,
	Random Matrices: Theory Appl. 2 (2013), 1350008, 72 pp.
\bibitem{ABK} G. Akemann, Z. Burda and M. Kieburg,
	Universal distribution of Lyapunov exponents for products of Ginibre matrices,
	J. Phys. A 47 (2014) no. 39, 395202, 35 pp.
\bibitem{AIK} G. Akemann, J. Ipsen and M. Kieburg,
	Products of rectangular random matrices: singular values and progressive scattering,
	Physical Review E 88 (2013), no. 5, 052118.
\bibitem{AKW} G. Akemann, M. Kieburg and L. Wei,
	Singular value correlation functions for products of Wishart random matrices, 
	J. Phys. A 46 (2013), no. 27, 275205, 22 pp.
\bibitem{AGZ} G.W. Anderson, A. Guionnet, and O. Zeitouni, 
	An Introduction to Random Matrices, Cambridge University Press, Cambridge, 2010. 
\bibitem{Bar} Yu. Baryshnikov, 
	GUEs and queues, Probab. Theory Related Fields 119 (2001), 256--274. 
\bibitem{Bor} A. Borodin, 
	Biorthogonal ensembles, Nuclear Phys. B 536 (1999), 704--732. 
\bibitem{ClRo} T. Claeys and S. Romano, 
	Biorthogonal ensembles with two-particle interactions, 
	Nonlinearity 27 (2014), 2419--2443.
\bibitem{Deift} P.A. Deift, 
	Orthogonal Polynomials and Random Matrices: a Riemann-Hilbert approach,
Courant Lecture Notes in Mathematics, vol.~3, American Mathematical Society, Providence, RI, 1999.
\bibitem{DeGi} P.A. Deift and D. Gioev,	
	Random Matrix Theory: Invariant Ensembles and Universality,
	Courant Lecture Notes in Mathematics, vol.~18, American Mathematical Society, Providence, RI, 2009.  
\bibitem{For} P.J. Forrester, Log-gases and Random Matrices,
	Princeton University Press, Princeton, NJ, 2010. 
\bibitem{For2} P.J. Forrester,
	The averaged characteristic polynomial for the Gaussian and chiral Gaussian ensembles with a source, 
	J. Phys. A 46 (2013), no. 34, 345204, 17 pp.
\bibitem{For3}
	P.J. Forrester, Probability densities and distributions for spiked and general variance Wishart $\beta$-ensembles, 
	Random Matrices: (2013), no. 4, 1350011, 19 pp. 
\bibitem{For4} P.J. Forrester, Eigenvalue statistics for product complex Wishart matrices,
	J. Phys. A. 47 (2014), no. 34, 3435202, 22 pp.
\bibitem{FR} P.J. Forrester and E. Rains,
Interpretations of some parameter dependent generalizations of classical matrix ensembles, 
Probab. Theory Relat. Fields 131 (2005), 1--61.
\bibitem{FN} P.J. Forrester and T. Nagao,
Determinantal correlations for classical projection processes,
J. Stat. Mech: Theory and Experiment (2011), P08011.

\bibitem{KKS} M. Kieburg, A.B.J. Kuijlaars and D. Stivigny, 
    Singular value statistics of matrix products with truncated unitary matrices,
    preprint arXiv:1501.03910. 
\bibitem{Konig} W. K\"onig, Orthogonal polynomial ensembles in probability theory, 
Probab. Surv. 2 (2005), 385--447.
\bibitem{Kui}  A.B.J. Kuijlaars, Multiple orthogonal polynomial ensembles,
in:  Recent Trends in Orthogonal Polynomials and Approximation Theory
(J. Arves\'u et al., eds.) Contemp. Math., 507, Amer. Math. Soc., Providence, RI, 2010., pp. 155--176.
\bibitem{KuSt} A.B.J. Kuijlaars and D. Stivigny,
	Singular values of products of random matrices and polynomial ensembles, 
	Random Matrices Theory Appl. 3 (2014), no. 3, 1450011, 22 pp.  
\bibitem{KuZh} A.B.J. Kuijlaars and L. Zhang,
	Singular values of products of Ginibre random matrices, multiple orthogonal polynomials and hard edge scaling limits, 
	Comm. Math. Phys. 332 (2014),  759--781. 
\bibitem{LWZ} D.-Z. Liu, D. Wang, and L. Zhang,
		Bulk and soft-edge universality for singular values of products of Ginibre random matrices,
		preprint arXiv:1412.6777.
\bibitem{Meyer} C.D. Meyer,
	Matrix Analysis and Applied Linear Algebra,
	SIAM, Philadelphia, 2000.
\bibitem{Strahov} E. Strahov,
	Differential equations for singular values of products of Ginibre random matrices,
	J. Phys. A 47 (2014), no. 32, 325203, 27 pp.
\bibitem{Zhang} L. Zhang, 
	A note on the limiting mean distribution of singular values for products 
	of two Wishart random matrices. J. Math. Phys. 54 (2013), no. 8, 083303, 8 pp.
\end{thebibliography}
\end{document}